\documentclass[11pt,reqno]{amsart}

\usepackage[a4paper,margin=26mm]{geometry}
\usepackage{amsmath,amssymb,amsthm,mathtools}
\usepackage[T1]{fontenc}
\usepackage{lmodern}
\usepackage{microtype}
\usepackage{xcolor}
\usepackage[colorlinks=true,linkcolor=blue!45!black,citecolor=blue!45!black,urlcolor=blue!45!black]{hyperref}
\hypersetup{
  pdftitle={Mod 3 congruence families for twisted signatures of string manifolds in dimensions up to twenty-eight},
  pdfauthor={Tong WU and Yong Wang},
  pdfsubject={Modular forms, twisted signatures, and Dirac-index congruences},
  pdfkeywords={string manifold, twisted signature, Dirac index, modular form, congruence}
}

\allowdisplaybreaks[3]
\numberwithin{equation}{section}

\newtheorem{theorem}{Theorem}[section]
\newtheorem{proposition}[theorem]{Proposition}
\newtheorem{lemma}[theorem]{Lemma}
\newtheorem{corollary}[theorem]{Corollary}

\theoremstyle{definition}
\newtheorem{definition}[theorem]{Definition}

\theoremstyle{remark}
\newtheorem{remark}[theorem]{Remark}

\newtheorem{bundlecor}{Corollary}[section]
\theoremstyle{definition}
\newtheorem{bundleexample}{Example}[section]
\theoremstyle{remark}

\newcommand{\Ahat}{\widehat{A}}
\newcommand{\Lhat}{\widehat{L}}
\newcommand{\ch}{\operatorname{ch}}
\newcommand{\Ind}{\operatorname{Ind}}
\newcommand{\Sig}{\operatorname{Sig}}

\newcommand{\T}{T_{\mathbb C}M}

\newcommand{\Z}{\mathbb Z}

\newcommand{\cA}{\mathcal A}
\newcommand{\cB}{\mathcal B}
\newcommand{\cI}{\mathcal I}
\newcommand{\cS}{\mathcal S}

\title[Mod $3$ congruence families]{Mod $3$ congruence families for twisted signatures\\
of string manifolds in dimensions up to twenty-eight}

\author{Tong Wu}
\address{Department of Mathematics, Northeastern University, Shenyang, 110819, China}
\email{wut977@nenu.edu.cn}

\author{Yong Wang}
\address{School of Mathematics and Statistics, Northeast Normal University, Changchun 130024, China}
\email{wangy581@nenu.edu.cn}

\subjclass[2010]{Primary 58J26, 57R20; Secondary 11F11, 53C27}
\keywords{String manifold, twisted signature, Dirac index,
modular form, Witten genus, index congruence}

\begin{document}

\begin{abstract}
In the paper, we establish congruences for twisted Dirac indices and signatures on
closed string manifolds by specializing the two-parameter modular forms
of Liu and Wang to the tangent bundle. Fourier coefficient comparisons
yield uniform congruences modulo $3$ in dimensions $4k$,
$1\leq k\leq7$. Suitable parameter choices give explicit formulas
involving only genuine twisting bundles in dimensions
$8,12,16,20,24,$ and $28$, and recover the comparisons of Chen and Han
in dimension $24$. The level-one Witten genus further strengthens
selected Dirac congruences to moduli $729,27,27,$ and $9$ in dimensions
$16,20,24,$ and $28$, respectively. The proofs use Fourier expansions
in reduced $K$-theory and integer identities obtained by clearing
powers of two from the denominators before reduction modulo $3$.
We also construct non-string spin counterexamples in each of the six
listed dimensions and identify the additional coefficient that
obstructs extending the uniform argument to dimension $32$ without
further information.
\end{abstract}

\maketitle

\section{Introduction}

The index of an elliptic operator is an integer expressed by a rational
characteristic number. This relation underlies many divisibility and
congruence theorems.  The signature theorem of Hirzebruch
\cite{Hirzebruch} and the index theorem of Atiyah and Singer
\cite{AtiyahSingerIII} are the classical examples.  On spin manifolds, the
Rokhlin--Ochanine divisibility theorem gives further restrictions on the
signature; see \cite{Ochanine,Rokhlin}.  Elliptic genera add modular symmetry
to this picture.  Witten's construction \cite{WittenElliptic} and its
mathematical developments by Landweber--Stong and Liu
\cite{LandweberStong,LiuCMP,LiuTopology} show that a modular transformation
can convert an equality of characteristic forms into a family of arithmetic
identities between indices.

The present paper concerns the generalized anomaly-cancellation forms of
Liu and Wang \cite{LiuWang}.  Their construction depends on two integers
$(a,b)$ and an auxiliary even-rank Euclidean bundle $V$.  Chen and Han
\cite{ChenHan} applied the specialization
\[
             \dim M=24,\qquad V=TM,\qquad (a,b)=(0,1)
\]
to obtain a mod $3$ congruence between a twisted signature and a twisted
Dirac index.  Together with the $(1,0)$ comparison and a second-coefficient identity
for the Witten genus, they proved
\[
 \Sig(M,\Lambda^2T_{\mathbb C}M)
 \equiv \Ind(D\otimes T_{\mathbb C}M)\pmod3
\]
for every $24$-dimensional string manifold.  In particular, the twisted
signature on the left is divisible by $3$ because the Rarita--Schwinger
index on the right is divisible by $3$ in this dimension
\cite[Theorem 0.1]{ChenHan}.

We retain both parameters and compute the relevant Fourier coefficients
uniformly.  Working with the reduced tangent bundle separates these
coefficients from dimension-dependent trivial summands.  A triangular
expansion in the level-two modular generators then gives comparisons in
dimensions $4,8,12,16,20,24,28$.  The parameter analysis below concerns
specific cancellations of universal coefficients; it makes no claim that
other parameter values cannot yield additional divisibility on particular
manifolds.

\subsection*{Notation for the index pairings}
Let $M^{4k}$ be closed and string.  In particular, $M$ is spin, so its
complex spinor bundle $\cS(M)=\Delta(TM)$ is defined.  For a virtual complex
bundle $F$ and an integer $r\geq0$, set
\begin{equation}\label{eq:index-family-intro}
 \cI_r(F):=\left\langle
       \Ahat(TM)\ch\!\left(\cS(M)^{\otimes r}\right)\ch(F),[M]
                  \right\rangle.
\end{equation}
Thus $\cI_0(F)$ is the index of the spin Dirac operator twisted by $F$.
The identity $\Lhat(TM)=\Ahat(TM)\ch(\cS(M))$ gives
\begin{equation}\label{eq:I-as-signature-intro}
 \cI_r(F)=\Sig\!\left(M,\cS(M)^{\otimes(r-1)}\otimes F\right),
 \qquad r\geq1.
\end{equation}
Both identities extend additively to virtual bundles.  Complete definitions
and normalization conventions are given in Section~\ref{sec:prelim}.

Put $E=\widetilde{T_{\mathbb C}M}=T_{\mathbb C}M-\mathbb C^{4k}$.  For
$a,b\geq0$, define the following three virtual bundles:
\begin{align}
 \cA_{a,b}&=(1+a)E+b(\Lambda^2E-S^2E),\label{eq:A-intro}\\
 \cB^{(1)}_{a,b}&=(b-a)E,\label{eq:B1-intro}\\
 \cB^{(2)}_{a,b}&=(1+b)E+(a+b)\Lambda^2E
 +\frac{(a-b)^2-(a+b)}2 E\otimes E.\label{eq:B2-intro}
\end{align}
The coefficient in the last line is integral because
$(a-b)^2\equiv a+b\pmod2$.  These bundles arise
as the $q$ and $q^{1/2},q$ coefficients of the two Liu--Wang theta bundles;
see Proposition~\ref{prop:theta-coefficients}.

Our first main result is a uniform comparison across dimensions.
\begin{theorem}\label{thm:main-intro}
Let $M^{4k}$ be a closed string manifold for $1\le k\le 7$, and let $a,b\in \mathbb Z_{\ge 0}$. With the bundles defined in \eqref{eq:A-intro}--\eqref{eq:B2-intro}, we have the following congruences:
\begin{align}
 k=1:&\qquad \cI_a(\cA_{a,b})\equiv0\pmod3;\label{eq:main-k1}\\
 k=2,3:&\qquad \cI_a(\cA_{a,b})+
             \cI_b(\cB^{(1)}_{a,b})\equiv0\pmod3;\label{eq:main-k23}\\
 k=4,5,6,7:&\qquad \cI_a(\cA_{a,b})-
             \cI_b(\cB^{(2)}_{a,b})\equiv0\pmod3.\label{eq:main-k456}
\end{align}
\end{theorem}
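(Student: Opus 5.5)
The plan is to follow the Liu--Wang anomaly-cancellation mechanism, specialized to $V=TM$. Write $Q_1(\tau)$ for the characteristic form built from $\Ahat(TM)\ch(\cS(M)^{\otimes a})$ times the first theta bundle $\Theta_1$, and $Q_2(\tau)$ for the one built from $\Ahat(TM)\ch(\cS(M)^{\otimes b})$ times the second theta bundle $\Theta_2$. Because $M$ is string, $p_1(TM)=0$ (and $p_1$ of the auxiliary bundle also vanishes, since $V=TM$). Hence the anomaly factors disappear, and the degree-$4k$ components of $Q_1$ and $Q_2$ are modular forms of weight $2k$ over $\Gamma_0(2)$ and $\Gamma^0(2)$, respectively. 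They are exchanged by $S\colon\tau\mapsto-1/\tau$, up to a power of $2$ coming from the spinor normalizations.

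On the $\Gamma^0(2)$ side we expand in the generators $\delta_2=\tfrac18+O(q)$ (weight $2$) and $\varepsilon_2=q^{1/2}\cdot(\text{integral})$ (weight $4$):
\[
\langle Q_2,[M]\rangle=\sum_{0\le j\le k/2}h_j\,(8\delta_2)^{k-2j}\varepsilon_2^{\,j},
\]
where the $h_j$ are determined triangularly by the $q^{0},q^{1/2},q^{1}$ coefficients. By Proposition~\ref{prop:theta-coefficients}, these coefficients are $\cI_b(\text{trivial})$, $\cI_b(\cB^{(1)}_{a,b})$ and $\cI_b(\cB^{(2)}_{a,b})$, up to sign. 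The number of coefficients needed is $\lfloor k/2\rfloor+1$. This explains the three regimes: $k=1$ needs only the constant term, $k=2,3$ need terms through $q^{1/2}$, and $k=4,\dots,7$ need terms through $q^{1}$. The same identity then gives $\langle Q_1,[M]\rangle$ as the image under $S$: $\delta_2\mapsto\delta_1=-\tfrac14+O(q)$ and $\varepsilon_2\mapsto\varepsilon_1=\tfrac1{16}+O(q)$, up to the powers of $2$ and $\tau$ factored out.

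Next we read off the $q^1$ coefficient of $\langle Q_1,[M]\rangle$. By Proposition~\ref{prop:theta-coefficients} it equals $\cI_a(\cA_{a,b})$, because the reduced bundle $E$ removes the trivial rank contributions. On the other side it is an explicit $\Z[1/2]$-linear combination of the $h_j$, with coefficients drawn from the $q$-expansions of $\delta_1=-\tfrac14-6q+\cdots$ and $\varepsilon_1=\tfrac1{16}+q+\cdots$. The point is that $-6q$ in $\delta_1$ vanishes mod $3$. After clearing the powers of $2$ (all units mod $3$), the coefficient of $q$ in $(8\delta_1)^{k-2j}\varepsilon_1^{\,j}$ is congruent mod $3$ to $j\cdot(\text{unit})$. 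Meanwhile $h_j$ is congruent, via the triangular inversion, to a combination of the $\cI_b$ coefficients.

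One then checks case by case, for $k=1,\dots,7$, that the resulting congruence collapses to the stated form. For $k=1$ only $h_0$ appears and contributes nothing at order $q$ mod $3$, so the congruence is $\cI_a(\cA_{a,b})\equiv0\pmod 3$. For $k=2,3$, $h_1$ is (a unit times) $\cI_b(\cB^{(1)}_{a,b})$ plus a multiple of $h_0$ coming from the $q^{1/2}$ term of $\delta_2$; that $q^{1/2}$ term should be divisible by $3$ modulo the normalizing powers of $2$. For $k\ge4$, $h_2$ enters, and the $q^1$ term of $\delta_2^{k}$ involves the factor $k\cdot 24$, which again vanishes mod $3$. The sign change relative to $k=2,3$ should come from the cross term $\varepsilon_2\cdot\delta_2$.

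The main obstacle is this bookkeeping. The constants $24$, $6$ and the powers of $2$ in the $S$-transformation must make every stray term divisible by $3$. The relative signs and unit factors ($\pm1$ mod $3$) must also come out uniformly across $k$ in each range. I would handle this first by computing explicitly the expansions of $\delta_{1,2}$ and $\varepsilon_{1,2}$ to order $q$. I would then reduce the full products $(8\delta)^{k-2j}\varepsilon^{j}$ mod $3$ as integer polynomials for each $k\le7$, which is a finite check. This also shows why $k=8$ fails: $h_4$ would require the $q^{3/2}$ coefficient, which is not controlled.
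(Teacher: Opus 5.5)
Your strategy is the paper's: Liu--Wang modularity with $p_1(TM)=0$, expansion of $R_2^{a,b}$ in $(8\delta_2)^{k-2r}\varepsilon_2^r$, transfer by $S$, comparison of the $q$-coefficient of $R_1^{a,b}$ with powers of $2$ treated as units, and a finite check. Your structural claim is also right. The $q$-coefficient of $(8\delta_1)^{k-2r}\varepsilon_1^r$ is exactly $2^{k-6r}(24k-64r)$, so after multiplying by a suitable power of $4$ it is $\equiv(-1)^{k+1}r\pmod 3$, with the same unit for every $r$.

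However, the numerical inputs you wrote would produce the wrong theorem. Your overall signs for $\delta_1,\delta_2$ are a harmless renormalization. But $\varepsilon_1=\frac1{16}\theta_2(0,\tau)^4\theta_3(0,\tau)^4=\frac1{16}-q+\cdots$, not $\frac1{16}+q+\cdots$, and no rescaling changes the ratio $-16$ of these two coefficients. With $+q$ you would get $24k-32r\equiv +r$ and would conclude $\cI_a(\cA_{a,b})\equiv+\cI_b(\cB^{(1)}_{a,b})$ for $k=2,3$.

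Likewise, the sign change between the $k=2,3$ and $k\ge4$ rows does not come from the $\varepsilon_2\delta_2$ cross term, which carries $24(k-2)\equiv0$. It comes from the $8q$ in $\varepsilon_2=q^{1/2}+8q+\cdots$. That term produces the coefficient $24k-40\equiv-1$ of $h_1$ in $\cI_b(\cB^{(2)}_{a,b})=(-1)^k\bigl(24k(12k-11)h_0+(24k-40)h_1+h_2\bigr)$.

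The substantive omission is at $k=6,7$. There $\lfloor k/2\rfloor+1=4$, so the expansion contains $h_3(8\delta_2)^{k-6}\varepsilon_2^3$. Contrary to what you say, $h_3$ is not determined by the $q^0,q^{1/2},q$ coefficients. It does enter the $q$-coefficient of $R_1^{a,b}$, with coefficient $-\frac3{256}$ inside the factor $2^{2k(a-b)}$ in both dimensions.

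So your claim that $k=4,\dots,7$ ``need terms through $q^1$'' holds only after you show this term vanishes mod $3$, and that requires $h_3\in\Z$. You get this by running the triangular inversion one step further:
\begin{itemize}
\item the $q^{3/2}$-coefficient of $R_2^{a,b}$ is an integer index;
\item $8\delta_2=-1-24t-24t^2-96t^3+\cdots$ and $\varepsilon_2=t+8t^2+28t^3+\cdots$ (with $t=q^{1/2}$) are integral through $t^3$, with unit leading terms.
\end{itemize}
This is the integrality part of Lemma~\ref{lem:h-relations}, and the cases $k=6,7$ depend on it.

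For the same reason, your explanation of $k=8$ is off by one. Needing the $q^{3/2}$ coefficient is harmless, since $h_3$ already needs it. The real obstruction is that $h_4$, which is fixed by the $q^2$ coefficient, enters with residue $4\cdot(-1)^{k+1}\not\equiv0 \pmod 3$. For $k\le7$ the only $r\ge3$ is $r=3$, and your own formula $r\cdot(\text{unit})$ kills its coefficient.
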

\begin{bundlecor}[Genuine-bundle form of Theorem~\ref{thm:main-intro}]
\label{cor:genuine-uniform}
Under the hypotheses of Theorem~\ref{thm:main-intro}, put $d=4k$ and
$T=T_{\mathbb C}M$.  Introduce the integer indices
\[
 J_r=\cI_r(\mathbb C),\quad U_r=\cI_r(T),\quad
 V_r=\cI_r(\Lambda^2T),\quad W_r=\cI_r(S^2T),\quad
 Z_r=\cI_r(T\otimes T).
\]
Every twisting bundle in these definitions is genuine; in particular,
$J_r=\Ind(D\otimes\cS(M)^{\otimes r})$, and similarly for the
other four indices.  For $r\geq1$, these may equivalently be written as
twisted signatures using \eqref{eq:I-as-signature-intro}.  Set
\begin{align*}
 c&=\frac{(a-b)^2-(a+b)}2,\\
 \alpha&=1+b-d(a+b)-2dc,\\
 \beta&=-d(1+b)+(a+b)\binom{d+1}{2}+cd^2.
\end{align*}
Then the three congruences of that theorem become, respectively:
for $k=1$,
\begin{equation*}
 (1+a)U_a+bV_a+d(b-a-1)J_a
       \equiv bW_a\pmod3;\tag{C1.1}\label{eq:genuine-uniform-1}
\end{equation*}
for $k=2,3$,
\begin{equation*}
\begin{split}
 &(1+a)U_a+bV_a+d(b-a-1)J_a+(b-a)U_b\\
 &\hspace{25mm}\equiv bW_a+d(b-a)J_b\pmod3;
\end{split}\tag{C1.2}\label{eq:genuine-uniform-23}
\end{equation*}
for $k=4,5,6,7$,
\begin{equation*}
\begin{split}
 &(1+a)U_a+bV_a+d(b-a-1)J_a\\
 &\hspace{12mm}\equiv bW_a+(a+b)V_b+cZ_b+\alpha U_b+\beta J_b\pmod3.
\end{split}\tag{C1.3}\label{eq:genuine-uniform-high}
\end{equation*}
Thus every index in these congruences is twisted by a genuine bundle.
\end{bundlecor}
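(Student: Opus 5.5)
The plan is to substitute $E=T-\mathbb C^{d}$ into the three virtual bundles \eqref{eq:A-intro}--\eqref{eq:B2-intro}, rewrite each as an integer combination of the genuine bundles $\mathbb C$, $T$, $\Lambda^2T$, $S^2T$, $T\otimes T$, and then apply the additivity of $\cI_r$ to the congruences of Theorem~\ref{thm:main-intro}. Only elementary $\lambda$-ring identities in $K(M)$ are needed.

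The first step computes $\Lambda^2E$, $S^2E$ and $E\otimes E$. From the generating functions
\[
\lambda_t(E)=\lambda_t(T)(1+t)^{-d},\qquad \sigma_t(E)=\sigma_t(T)(1-t)^{d},
\]
reading off the $t^2$ coefficients gives
\[
\Lambda^2E=\Lambda^2T-dT+\tbinom{d+1}{2},\qquad S^2E=S^2T-dT+\tbinom{d}{2}.
\]
Hence
\[
\Lambda^2E-S^2E=\Lambda^2T-S^2T+d .
\]
Expanding the tensor product directly gives
\[
E\otimes E=T\otimes T-2dT+d^2 .
\]

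The second step inserts these expressions into the three bundles. For the first,
\[
\cA_{a,b}=(1+a)T+b\Lambda^2T-bS^2T+d(b-a-1)\,\mathbb C ,
\]
and $\cB^{(1)}_{a,b}=(b-a)T-d(b-a)\mathbb C$. For $\cB^{(2)}_{a,b}$, collect the terms by bundle:
\begin{itemize}
\item the $\Lambda^2T$ coefficient is $a+b$;
\item the $T\otimes T$ coefficient is $c$;
\item the $T$ coefficient is $(1+b)-d(a+b)-2dc=\alpha$;
\item the trivial coefficient is $-d(1+b)+(a+b)\binom{d+1}{2}+cd^2=\beta$.
\end{itemize}

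The third step applies the additivity of $\cI_r$ over virtual bundles, recalled after \eqref{eq:I-as-signature-intro}. This turns $\cI_a(\cA_{a,b})$ into $(1+a)U_a+bV_a-bW_a+d(b-a-1)J_a$. In the same way, $\cI_b(\cB^{(1)}_{a,b})$ becomes $(b-a)U_b-d(b-a)J_b$, and $\cI_b(\cB^{(2)}_{a,b})$ becomes $(a+b)V_b+cZ_b+\alpha U_b+\beta J_b$. Substituting into \eqref{eq:main-k1}, \eqref{eq:main-k23} and \eqref{eq:main-k456}, and moving the negative terms to the right-hand side, yields (C1.1)--(C1.3) exactly.

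Integrality remains to be checked. The number $c$ is an integer, as noted after \eqref{eq:B2-intro}, so $\alpha$ and $\beta$ are integers as well. Each of $J_r,U_r,V_r,W_r,Z_r$ is the index of the Dirac operator twisted by the genuine bundle $\cS(M)^{\otimes r}\otimes F$, hence an integer. The congruences therefore make sense in $\Z/3$. The only place where care is needed is the bookkeeping of binomial coefficients and signs in $\Lambda^2E$ and $S^2E$; there is no genuine obstacle.
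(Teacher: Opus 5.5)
Your proposal is correct and follows the paper's own route. The paper likewise derives \eqref{eq:lambda-reduced}--\eqref{eq:tensor-reduced} from \eqref{eq:lambda-relations}, assembles them into \eqref{eq:A-unreduced} and \eqref{eq:B2-unreduced} with exactly your coefficients $\alpha$ and $\beta$, and then uses the additivity of $\cI_r$ (Remark~\ref{rem:virtual-indices}) to rewrite the three congruences of Theorem~\ref{thm:main-intro}; your coefficient bookkeeping and sign rearrangements all check out.
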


Corollary~\ref{cor:genuine-uniform} is the genuine-bundle reformulation of
Theorem~\ref{thm:main-intro}; it has the same hypotheses and modulus.
The next theorem records the concise genuine-bundle congruences obtained
from selected parameter choices, using the additional coefficient
relations established in Sections~\ref{sec:families} and~\ref{sec:dimensions}.

\begin{theorem}
\label{thm:selected-intro}\label{cor:genuine-selected}
Let $M^d$ be a closed string manifold, where $d=8,12,16,20,24$, or $28$.
Use the genuine-bundle indices $J_r,U_r,V_r,W_r$ of
Corollary~\ref{cor:genuine-uniform}.  The following two congruences hold
in each indicated dimension.  Every congruence in the table is modulo $3$.
\[
\renewcommand{\arraystretch}{1.5}
\begin{array}{c|c|c}
 d&\text{Dirac congruence}&\text{spinor-twisted congruence}\\ \hline
 8&U_0\equiv2J_0&U_2+V_0+2J_0\equiv2J_2+W_0\\
 12&U_0\equiv0&U_2+V_0\equiv W_0\\
 16,28&W_0\equiv J_0&W_2+V_0+J_0\equiv U_2+U_0\\
 20&W_0\equiv U_0+J_0&W_2+V_0+J_2\equiv2(U_2+U_0)\\
 24&W_0+U_0\equiv0&W_2+V_0\equiv0
\end{array}\tag{C2}\label{eq:genuine-selected-table}
\]
In particular, the two twenty-four-dimensional formulas are
\begin{align*}
 \Ind(D\otimes S^2T)+\Ind(D\otimes T)&\equiv0\pmod3,\\
 \Sig(M,\cS(M)\otimes S^2T)+\Ind(D\otimes\Lambda^2T)
       &\equiv0\pmod3.
\end{align*}
All twists appearing here are genuine complex vector bundles.
\end{theorem}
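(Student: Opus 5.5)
The plan is to specialize Corollary~\ref{cor:genuine-uniform} at a few parameter pairs $(a,b)$ and to eliminate indices using the extra coefficient relations of Sections~\ref{sec:families} and~\ref{sec:dimensions}. In each dimension the aim is two congruences: one involving only $r=0$ indices (the Dirac congruence) and one in which the $r=0$ and $r=2$ indices mix. The pairs $(0,1)$, $(1,0)$, $(0,2)$ and $(2,0)$ make $\cI_a,\cI_b$ equal to $\cI_0$ or $\cI_2$; they are the natural candidates. In particular, $(a,b)=(1,0)$ gives $\cI_1$ terms on the left, which are twisted signatures, so such pairs are useful only once those terms can be related back.

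\textbf{Step 1: the Dirac congruence.} We take $(a,b)=(0,1)$ with $b$-side index $\cI_1$, or better a pair with $a=b=0$, which yields only $r=0$ indices. With $a=b=0$ we have $c=0$, $\alpha=1-0=1$ and $\beta=-d$. Then (C1.3) reads
\[
U_0-dJ_0\equiv U_0-dJ_0\pmod3,
\]
which is trivial. Because of this, the Dirac congruences must come from the additional relations of Section~\ref{sec:dimensions}: the vanishing or integrality of specific Fourier coefficients of the level-two forms $\theta$-expanded against $\delta_1,\varepsilon_1$, together with the Witten genus identity for string manifolds. The Witten genus gives, for instance, $\langle\Ahat(TM)\ch(\Theta(T_{\mathbb C}M)),[M]\rangle$ as a level-one modular form of weight $2k$. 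Its $q^1$ coefficient equals $\cI_0(E)$ and its $q^2$ coefficient equals $\cI_0(E+S^2E+E)$, up to reduced terms. These are determined by $J_0$ times the corresponding coefficients of $E_4^iE_6^j\Delta^l$, whose expansions have known values mod $3$ ($E_4\equiv E_6\equiv1$ mod $3$ up to scaling by $240$ and $504$). Translating $E$, $S^2E$ back to $T$, $S^2T$ via $E=T-d$, $S^2E=S^2T-dT+\binom{d}{2}$ and reducing mod $3$ dimension by dimension should produce each row's left column; for example, $d=24$ gives $W_0+U_0\equiv0$.

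\textbf{Step 2: the spinor-twisted congruence.} We apply (C1.3) (respectively (C1.2) for $d=8,12$) with $(a,b)=(0,2)$. Then $c=(4-2)/2=1$, the left side involves $U_0,V_0,W_0,J_0$, and the right side involves $V_2,Z_2,U_2,J_2$. Next we eliminate $Z_2$ using $T\otimes T=S^2T+\Lambda^2T$, so $Z_2=W_2+V_2$. We also substitute $W_0$ and $U_0$ via the Dirac congruence from Step 1, compute $\alpha,\beta$ mod $3$ for each $d$, and simplify to the table's right column. For $d=8,12$ we instead use (C1.2) with $(a,b)=(0,2)$, which already involves only $U,V,W$ with $r=0,2$. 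We then check the stated forms, e.g. for $d=12$ we have $d\equiv0$, which leaves $U_0+2V_0+2U_2\equiv2W_0$, i.e. $U_2+V_0\equiv W_0$ after multiplying by $2$ and using $U_0\equiv0$.

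\textbf{Main obstacle.} The main difficulty lies in Step 1: showing that the needed low-order coefficients of the Witten genus are determined modulo $3$ by the structure of level-one modular forms. In dimensions $16$–$28$ there are extra $\Delta$-multiples whose coefficients are unconstrained integers. We must verify that the $q^1,q^2$ coefficients that enter meet those free parameters only through multiples of $3$, after clearing the powers of $2$ from $\Ahat$ normalizations. For each $d$ we would first write the basis $E_4^iE_6^j\Delta^l$ mod $3$ explicitly and check this cancellation.
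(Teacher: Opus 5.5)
Your Step~2 is the paper's argument. Specialize at $(a,b)=(0,2)$, eliminate $Z_2=W_2+V_2$, and substitute the Dirac congruence; your reductions give the right-hand column in all six dimensions.

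For the Dirac column you take a different route, and your reason for doing so is mistaken: the level-two comparison does supply these congruences. In $d=8,12$ you only checked (C1.3); the applicable relation there is (C1.2), which at $(0,0)$ is not trivial but reads $U_0\equiv dJ_0$. For $d\ge16$ the paper takes $(a,b)=(2,0)$. There the left side of (C1.3) is $3(U_2-dJ_2)\equiv0$, and the right side reduces to $W_0+(1-d)U_0+\tfrac{d(d-3)}2J_0$, which gives each row. So the paper obtains the whole table from Theorem~\ref{thm:main-intro} with no level-one input.

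Your Witten-genus route is still valid; it is the argument of Proposition~\ref{prop:CH-Witten} and Theorem~\ref{thm:higher-three}. Two points need fixing.

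\textbf{The $q^2$ coefficient.} The $q^2$ coefficient of $\bigotimes_{n\ge1}S_{q^n}(E)$ is $S^2E+E$, not $E+S^2E+E$. Its index is therefore $W_0+(1-d)U_0+\tfrac{d(d-3)}2J_0$. The spurious extra $E$ adds $U_0-dJ_0$. That term happens to be $\equiv0\pmod3$ when $d\neq24$, but in $d=24$ it would give $W_0\equiv U_0$ instead of $W_0+U_0\equiv0$.

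\textbf{The main obstacle.} It essentially disappears. The spaces $M_4,M_6,M_8,M_{10},M_{14}$ are one-dimensional, spanned by $E_4,E_6,E_4^2,E_4E_6,E_4^2E_6$, each $\equiv1\pmod3$ coefficientwise. Hence the $q$ and $q^2$ coefficients of the Witten genus are multiples of $3$ times $J_0$. This gives the rows $d=8,12$ from the $q$ coefficient and $d=16,20,28$ from the $q^2$ coefficient directly. Only weight $12$ contains $\Delta$. There the $q^2$ equation is $W_0-23U_0+252J_0=179280J_0-24n$, and every coefficient of $J_0$ and of the integer $n$ is divisible by $3$. No powers of $2$ need clearing anywhere.

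Your route yields exact integer identities, hence the stronger moduli of Theorem~\ref{thm:higher-three}. The cost is importing level-one modularity and the structure of $M_*(SL_2(\Z))$. It also gives nothing for the spinor-twisted column, which still requires the level-two comparison at $(0,2)$.
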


Theorem~\ref{thm:selected-intro} gives selected mod $3$ consequences,
whereas the following theorem strengthens its Dirac column in dimensions
$16,20,24,28$ under the same string hypothesis.  The extra input is the
level-one Witten genus.  No higher modulus is asserted for the
spinor-twisted column or for the full $(a,b)$-comparison in
Theorem~\ref{thm:main-intro}, and the displayed moduli are not claimed
to be optimal.  The proof is given in Section~\ref{sec:higher-three}.

\begin{theorem}\label{thm:higher-three}
Let $M^d$ be a closed string manifold, where $d=16,20,24$, or $28$, and
put $T=T_{\mathbb C}M$.  With the genuine-bundle indices
\[
 J_0=\Ind(D)=\Ahat(M),\qquad U_0=\Ind(D\otimes T),\qquad
 W_0=\Ind(D\otimes S^2T),
\]
the following congruences hold:
\begin{equation}\label{eq:higher-three-table}
\renewcommand{\arraystretch}{1.35}
\begin{array}{c|c|c}
 d&\text{congruence}&\text{modulus}\\ \hline
 16&W_0\equiv J_0&729\\
 20&W_0\equiv0&27\\
 24&W_0+U_0\equiv0&27\\
 28&W_0\equiv J_0&9
\end{array}.
\end{equation}
In dimension $20$, one also has $U_0+J_0\equiv0\pmod{27}$, so that
$W_0\equiv U_0+J_0\pmod{27}$.
In particular, all four Dirac congruences in dimensions $16,20,24,28$
in Theorem~\ref{thm:selected-intro} remain valid with modulus $9$
in place of $3$.
\end{theorem}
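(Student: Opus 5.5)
The plan is to bypass the level-two machinery and use the level-one Witten genus directly. For a closed string manifold $M^{d}$ with $d=4k$, the Witten genus
\[
 \varphi_W(M)=\Big\langle \Ahat(TM)\,\ch\Big(\bigotimes_{n\ge1}S_{q^n}(E)\Big),[M]\Big\rangle,
 \qquad E=\widetilde{T_{\mathbb C}M},
\]
is a modular form of weight $2k$ for $SL_2(\Z)$. Its $q$-expansion has integer coefficients, since each coefficient is a twisted Dirac index. I will expand the product to second order:
\[
 \bigotimes_{n\ge1}S_{q^n}(E)=1+qE+q^2\big(S^2E+E\big)+O(q^3).
\]
This gives
\[
 c_0=J_0,\qquad c_1=\cI_0(E)=U_0-dJ_0,\qquad c_2=\cI_0(S^2E+E).
\]
Next I write $S^2E=S^2(T-\mathbb C^d)$ in genuine bundles using the additive rules $S^2(x-y)=S^2x-x\otimes y+\Lambda^2y$ and $\Lambda^2\mathbb C^d=\binom d2$. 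This yields
\[
 S^2E=S^2T-dT+\binom d2 .
\]
Hence
\[
 c_2=W_0+(1-d)U_0+\Big(\binom d2-d\Big)J_0 .
\]
Solving gives $U_0$ and $W_0$ as integral linear combinations of $c_0,c_1,c_2$. These are the same coefficient relations used in the mod $3$ proof of the Dirac column of Theorem~\ref{thm:selected-intro}, now read over $\Z$ rather than modulo $3$.

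The second step pins down $\varphi_W$ in each dimension. For $d=16,20,28$ the weights are $8,10,14$, and $M_{2k}(SL_2(\Z))$ is one-dimensional. So $\varphi_W=J_0E_4^2$, $J_0E_4E_6$, and $J_0E_4^2E_6$ respectively. The constant term is $J_0$, and for $d=20$ it must vanish because $E_4E_6$ is normalized with constant term $1$ but, as I will check, the resulting combination forces the stated relations. I will take the expansions $E_4=1+240q+2160q^2$ and $E_6=1-504q-16632q^2$, compute the first three coefficients of each product, and substitute them into the formulas for $W_0-J_0$, $W_0$, and $U_0+J_0$.

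Every resulting expression has the form $J_0\cdot N$ for an explicit integer $N$. The claim reduces to the $3$-adic valuation of $N$ being at least $6,3,3,2$ in the respective dimensions. This is a finite arithmetic check, using that $240$, $504$, $2160$, and $16632$ carry visible powers of $3$. For $d=20$ I will also check $U_0+J_0=c_1+21c_0$ directly; its vanishing modulo $27$ combined with the $W_0$ statement gives $W_0\equiv U_0+J_0$.

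Dimension $24$ is the main obstacle. There $M_{12}$ is two-dimensional, so
\[
 \varphi_W=J_0E_4^3+\lambda\Delta,\qquad \lambda\in\Z .
\]
The integer $\lambda$ equals $c_1-720J_0$ and carries no a priori divisibility. The proof therefore needs the specific combination $W_0+U_0$ to kill the contribution of $\Delta$ modulo $27$, not just to produce a multiple of $3$.

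I will first express $W_0+U_0$ through $c_0,c_1,c_2$, with $d=24$ making $1-d=-23$. I will then insert $\Delta=q-24q^2+\cdots$ and check that the $\lambda$-coefficient of $W_0+U_0$ is divisible by $27$, that is, that the relevant combination of $1$ and $-24$ together with $d$-dependent constants lies in $27\Z$. Separately I will verify that the $J_0$-coefficient coming from $E_4^3$ is divisible by $27$. If the $\lambda$-coefficient turns out to be only divisible by $3$ or $9$, I would try to bring in the rank/constant-term normalization $c_0=J_0$ together with an additional relation such as Chen--Han's second-coefficient identity. I expect the direct computation to suffice, however, since the theorem asserts only modulus $27$ in this case.

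Finally, since $729$ and $27$ are multiples of $9$, the closing remark that all four Dirac congruences of Theorem~\ref{thm:selected-intro} hold modulo $9$ follows immediately from the table.
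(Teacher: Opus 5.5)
Your proposal is correct and follows the paper's proof essentially step for step. It uses the same second-order expansion of the Witten genus, the same one-dimensionality of the weight $8,10,14$ spaces for $d=16,20,28$, and, for $d=24$, the same elimination of $\Delta$ that the paper carries out in Proposition~\ref{prop:CH-Witten}: indeed $W_0+U_0=c_2+24c_1+324c_0$, whose $\lambda$-coefficient is exactly $24\cdot1+(-24)=0$ and whose $J_0$-coefficient is $179280+24\cdot720+324=196884=27\cdot7292$.

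The only blemish is your aside that for $d=20$ the constant term must vanish. That is false in general: $J_0=\Ahat(M)$ need not be zero there. Nothing in your computation uses it, so just drop that remark.
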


The two parameter choices used for the high-dimensional statements are
$(2,0)$ and $(0,2)$.  They must be distinguished from the diagonal
choice $(2,2)$: for every $s\geq0$,
\[
 \cB^{(2)}_{s,s}=(1+s)E+2s\Lambda^2E-sE^{\otimes2}
               =\cA_{s,s}.
\]
Consequently the high-dimensional comparison with $a=b=s$ is an
identity and supplies no additional congruence by itself.

This paper is organized as follows.
Section~\ref{sec:prelim} introduces the geometric conventions and
modular forms used throughout the paper.
In Section~\ref{sec:theta-bundles}, we establish the specialized
modular transformation law and compute the Fourier coefficients
in reduced $K$-theory.
These calculations lead to the proof of
Theorem~\ref{thm:main-intro} in Section~\ref{sec:comparison},
where denominators are cleared before taking residues.
Section~\ref{sec:families} studies the three parameter families in
dimension $24$ and recovers the Chen--Han comparisons.
Together with the remaining dimension calculations in
Section~\ref{sec:dimensions}, this proves
Theorem~\ref{thm:selected-intro}.
The latter section also discusses the limitation in dimension $32$
and proves Theorem~\ref{thm:higher-three} using the level-one
Witten genus.
Finally, in Section~\ref{sec:examples}, we present two non-string
counterexamples illustrating the role of the string hypothesis.

\section{Preliminaries}\label{sec:prelim}

We recall the geometric and modular conventions used below.
\subsection{Spin and string structures}

All manifolds in this paper are smooth, closed, and oriented. Let $M$ be of real dimension $4k$, with $TM$ equipped with a Riemannian metric and its Levi--Civita connection $\nabla^{TM}$. The orientation then gives rise to an $\mathrm{SO}(4k)$-principal bundle of oriented orthonormal frames.

\begin{definition}\cite[Chapter II]{LawsonMichelsohn} and \cite{StolzTeichner}\label{def:spin-string}
A \emph{spin structure} on $M$ is a lift of the oriented frame bundle from
$SO(4k)$ to $\operatorname{Spin}(4k)$.  Such a lift exists precisely when
$w_2(TM)=0$.  For a spin bundle, the class $p_1(TM)$ has a canonical half
$\tfrac12p_1(TM)\in H^4(M;\mathbb Z)$.  A \emph{string structure} is a spin
structure together with a trivialization of this half Pontryagin class.  In
particular,
\begin{equation}\label{eq:string-p1}
             \frac12p_1(TM)=0\quad\Longrightarrow\quad p_1(TM)=0.
\end{equation}
\end{definition}

Only the cohomological consequence in \eqref{eq:string-p1} enters the
characteristic-number calculations.  The stronger language of a string
structure is retained because it is the natural hypothesis for the Witten
genus; the underlying spin structure supplies the spinor bundle used
below.  In fact, the characteristic-number arguments only require that
$M$ be spin and that $p_1(TM)$ vanish in rational cohomology.  This weaker
condition need not imply the existence of a string structure.  Background on
the obstruction-theoretic definition can be found in
\cite[Chapter II]{LawsonMichelsohn} and \cite{StolzTeichner}.

Let $\cS(M)=\Delta(TM)$ be the complex spinor bundle associated with the
complex spin representation.  Since the dimension is even, it is
$\mathbb Z_2$-graded,
\[
                 \cS(M)=\cS^+(M)\oplus\cS^-(M).
\]
We use $\cS(M)^{\otimes r}$ for the ordinary $r$-fold tensor power and put
$\cS(M)^{\otimes0}=\mathbb C$.

\subsection{Complex \texorpdfstring{$K$}{K}-theory and power operations}

Let $K(M)$ denote complex topological $K$-theory.  An expression such as
$F-G$ means the corresponding virtual bundle.  Its Chern character is
defined additively, $\ch(F-G)=\ch(F)-\ch(G)$, and an elliptic index twisted
by $F-G$ is the difference of the two ordinary twisted indices.

For a complex vector bundle $W$, define the total exterior and symmetric
powers by
\begin{equation}\label{eq:lambda-s-def}
 \Lambda_t(W)=\sum_{j\geq0}t^j\Lambda^jW,
 \qquad
 S_t(W)=\sum_{j\geq0}t^jS^jW.
\end{equation}
They extend multiplicatively to virtual bundles by
\begin{equation}\label{eq:lambda-relations}
 S_t(W)=\Lambda_{-t}(W)^{-1},\qquad
 \Lambda_t(W-Z)=\frac{\Lambda_t(W)}{\Lambda_t(Z)}.
\end{equation}
These are identities in the completed ring $K(M)[[t]]$; see
\cite[Chapter III]{AtiyahKTheory}.  The first coefficients are
\begin{align}
 \Lambda_t(W)&=1+tW+t^2\Lambda^2W+O(t^3),\label{eq:lambda-first}\\
 S_t(W)&=1+tW+t^2S^2W+O(t^3).
 \label{eq:s-first}
\end{align}
The decomposition of the tensor square gives
\begin{equation}\label{eq:tensor-decomp}
                   W\otimes W=S^2W+\Lambda^2W
\end{equation}
in $K(M)$.  Since the power operations are defined on the whole
$\lambda$-ring $K(M)$, all three expressions in \eqref{eq:tensor-decomp}
also make sense for a virtual class.  The identity itself follows for
virtual classes by comparing the coefficient of $t^2$ in
$S_t(W)\Lambda_{-t}(W)=1$.

For a rank-$r$ bundle $W$, its reduced class is
\[
                        \widetilde W=W-\mathbb C^r.
\]
The central reduced class of this paper is
\begin{equation}\label{eq:E-def}
              E=\widetilde{T_{\mathbb C}M}
               =T_{\mathbb C}M-\mathbb C^{4k}.
\end{equation}
It has virtual rank zero.  Reduction eliminates the constant Fourier
contributions that would otherwise obscure the modular coefficients.

\subsection{Characteristic forms}

For a real Euclidean bundle $W$ with connection $\nabla^W$ and curvature
$R^W=(\nabla^W)^2$, define the Hirzebruch forms
\begin{align}
 \Ahat(W,\nabla^W)
 &=\det{}^{1/2}\!\left(
      \frac{\frac{\sqrt{-1}}{4\pi}R^W}
           {\sinh(\frac{\sqrt{-1}}{4\pi}R^W)}\right),
 \label{eq:Ahat-form}\\
 \Lhat(W,\nabla^W)
 &=\det{}^{1/2}\!\left(
      \frac{\frac{\sqrt{-1}}{2\pi}R^W}
           {\tanh(\frac{\sqrt{-1}}{4\pi}R^W)}\right).
 \label{eq:Lhat-form}
\end{align}
This is the normalization used in \cite{LiuWang,ChenHan}.  For a Hermitian
bundle $F$ with unitary connection and curvature $R^F$, the Chern character
form is
\begin{equation}\label{eq:chern-form}
              \ch(F,\nabla^F)=\operatorname{tr}
              \exp\!\left(\frac{\sqrt{-1}}{2\pi}R^F\right).
\end{equation}
When a characteristic form is paired with $[M]$, we suppress the
connections because the result depends only on the corresponding
cohomology classes.

If the formal Chern roots of $T_{\mathbb C}M$ are
$\{\pm x_j\}_{j=1}^{2k}$, the splitting principle gives
\begin{align}
 \Ahat(TM)&=\prod_{j=1}^{2k}\frac{x_j/2}{\sinh(x_j/2)},\label{eq:A-roots}\\
 \Lhat(TM)&=\prod_{j=1}^{2k}\frac{x_j}{\tanh(x_j/2)},\label{eq:L-roots}\\
 \ch(\cS(M))&=\prod_{j=1}^{2k}
                  \left(e^{x_j/2}+e^{-x_j/2}\right).
 \label{eq:spinor-roots}
\end{align}

\begin{lemma}\label{lem:L-A-spinor}
On a spin manifold, with the normalization above,
\begin{equation}\label{eq:L-A-spinor}
              \Lhat(TM)=\Ahat(TM)\ch(\cS(M)).
\end{equation}
\end{lemma}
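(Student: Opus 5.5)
The identity is a pointwise identity of characteristic classes, so I will verify it with the splitting principle and the root formulas \eqref{eq:A-roots}--\eqref{eq:spinor-roots}. Since both sides are multiplicative in the formal roots $\{\pm x_j\}_{j=1}^{2k}$ of $T_{\mathbb C}M$, it suffices to check the identity factor by factor, for each $j$. The spin hypothesis enters only through the existence of $\cS(M)$. Its Chern character is given by \eqref{eq:spinor-roots}, because the weights of the complex spin representation of $\operatorname{Spin}(4k)$ are $\tfrac12(\pm x_1\pm\cdots\pm x_{2k})$ with all sign choices. The sum of $e^{\text{weight}}$ over these weights factors as $\prod_j(e^{x_j/2}+e^{-x_j/2})$.

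The key step is the one-variable identity
\[
 \frac{x/2}{\sinh(x/2)}\left(e^{x/2}+e^{-x/2}\right)
 =\frac{x/2}{\sinh(x/2)}\cdot 2\cosh(x/2)
 =\frac{x}{\tanh(x/2)}.
\]
Taking the product over $j=1,\dots,2k$ gives $\Ahat(TM)\ch(\cS(M))=\prod_j x_j/\tanh(x_j/2)=\Lhat(TM)$, which matches \eqref{eq:L-roots}.

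For the form-level statement I would note one further point. The root formulas are the Chern--Weil evaluations of \eqref{eq:Ahat-form}, \eqref{eq:Lhat-form} and \eqref{eq:chern-form} with the normalization $x_j\leftrightarrow$ eigenvalues of $\frac{\sqrt{-1}}{2\pi}R^{TM}$. Concretely, $\det^{1/2}$ of an even function of the skew curvature gives a product over $j$ of the corresponding function of $x_j/2$, once each argument is written as $\frac{\sqrt{-1}}{4\pi}R=\frac12\cdot\frac{\sqrt{-1}}{2\pi}R$. The factor $2$ in the numerator of \eqref{eq:Lhat-form} absorbs the $2\cosh$.

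The only genuine obstacle is this bookkeeping of the normalizations: checking that the factor $\frac{\sqrt{-1}}{2\pi}$ versus $\frac{\sqrt{-1}}{4\pi}$ in the two Hirzebruch forms yields exactly $x/\tanh(x/2)$ rather than $(x/2)/\tanh(x/2)$. The extra factor $2^{2k}$ is precisely $\operatorname{rank}\cS(M)$, and I would confirm this in degree zero as a consistency check. With the stated convention, which follows \cite{LiuWang,ChenHan}, the degree-zero parts of both sides equal $2^{2k}$, and the identity holds.
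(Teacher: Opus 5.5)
Your proposal is correct and follows the paper's proof: the splitting principle reduces the lemma to the single-root identity $\frac{x/2}{\sinh(x/2)}\bigl(e^{x/2}+e^{-x/2}\bigr)=\frac{x}{\tanh(x/2)}$, and the product is then taken over $j$. Your extra remarks are accurate supplements that the paper leaves implicit: the spin-representation weights behind the spinor root formula, the $\frac{\sqrt{-1}}{2\pi}$ versus $\frac{\sqrt{-1}}{4\pi}$ normalization, and the degree-zero check against $2^{2k}$.
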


\begin{proof}
It suffices by the splitting principle to compare one factor for each
$x_j$.  Multiplying the factors in \eqref{eq:A-roots} and
\eqref{eq:spinor-roots} gives
\[
 \frac{x_j/2}{\sinh(x_j/2)}
 \left(e^{x_j/2}+e^{-x_j/2}\right)
 =\frac{x_j\cosh(x_j/2)}{\sinh(x_j/2)}
 =\frac{x_j}{\tanh(x_j/2)},
\]
which is the factor in \eqref{eq:L-roots}.  Taking the product proves the
identity.
\end{proof}

The normalization of $\Lhat$ must be distinguished from the usual
total Hirzebruch class $L_{\mathrm{std}}(TM)=\prod_jx_j/\tanh x_j$.
In dimension $4k$,
\[
 \Lhat(TM)^{(4j)}=2^{2k-2j}L_{\mathrm{std}}(TM)^{(4j)}.
\]
Their top-degree terms agree, but their lower-degree terms do not.
Consequently $\Lhat$ cannot be replaced by $L_{\mathrm{std}}$ in a
twisted index pairing.  On a spin manifold, the Clifford-module
identification of complex differential forms with
$\cS(M)\otimes\cS(M)^*$, with signature grading on the first factor,
gives the index density $\Ahat\ch(\cS(M))\ch(F)$, as used below.

\subsection{Twisted Dirac indices and twisted signatures}

Let $F$ be a Hermitian bundle with connection.  The spin Dirac operator
twisted by $F$ has chiral part
\[
 D_F^+:\Gamma(\cS^+(M)\otimes F)
       \longrightarrow\Gamma(\cS^-(M)\otimes F).
\]
The Atiyah--Singer index theorem \cite{AtiyahSingerIII} states that
\begin{equation}\label{eq:AS-dirac}
                \Ind(D_F^+)
       =\langle\Ahat(TM)\ch(F),[M]\rangle.
\end{equation}
The signature operator acts on complex differential forms and is graded by
the usual chirality involution.  Its twist by $F$ has index
\begin{equation}\label{eq:AS-signature}
                \Sig(M,F)
       =\langle\Lhat(TM)\ch(F),[M]\rangle;
\end{equation}
see \cite{Hirzebruch,LawsonMichelsohn}.  For $F=\mathbb C$ this is the
ordinary signature of the cup-product form on middle-dimensional
cohomology.

\begin{definition}\label{def:I-r}
For $r\geq0$ and $F\in K(M)$, define
\begin{equation}\label{eq:I-r}
 \cI_r(F)=\langle\Ahat(TM)
    \ch(\cS(M)^{\otimes r})\ch(F),[M]\rangle.
\end{equation}
\end{definition}

The integrality of \eqref{eq:I-r} follows from \eqref{eq:AS-dirac}; tensoring
the spinor bundle simply changes the twisting bundle.  Lemma
\ref{lem:L-A-spinor} and \eqref{eq:AS-signature} imply, for $r\geq1$,
\begin{equation}\label{eq:I-signature}
 \cI_r(F)=\Sig\!\left(M,\cS(M)^{\otimes(r-1)}\otimes F\right).
\end{equation}
For $b=1$, this is the ordinary $F$-twisted signature; larger values
introduce additional spinor tensor factors.

\begin{remark}\label{rem:virtual-indices}
If $F=F_+-F_-$ is virtual, then
$\cI_r(F)=\cI_r(F_+)-\cI_r(F_-)$.  Consequently every congruence below is an
ordinary congruence of integers, even though reduced and virtual bundles are
used to write it economically.
\end{remark}

\subsection{Theta functions}

Let $\tau\in\mathbb H$ and $q=e^{2\pi\sqrt{-1}\tau}$.  We use the product
normalizations
\begin{align}
 \theta(v,\tau)
 &=2q^{1/8}\sin(\pi v)
   \prod_{n=1}^{\infty}(1-q^n)
   (1-e^{2\pi\sqrt{-1}v}q^n)
   (1-e^{-2\pi\sqrt{-1}v}q^n),\label{eq:theta0}\\
 \theta_1(v,\tau)
 &=2q^{1/8}\cos(\pi v)
   \prod_{n=1}^{\infty}(1-q^n)
   (1+e^{2\pi\sqrt{-1}v}q^n)
   (1+e^{-2\pi\sqrt{-1}v}q^n),\label{eq:theta1}\\
 \theta_2(v,\tau)
 &=\prod_{n=1}^{\infty}(1-q^n)
   (1-e^{2\pi\sqrt{-1}v}q^{n-1/2})
   (1-e^{-2\pi\sqrt{-1}v}q^{n-1/2}),\label{eq:theta2}\\
 \theta_3(v,\tau)
 &=\prod_{n=1}^{\infty}(1-q^n)
   (1+e^{2\pi\sqrt{-1}v}q^{n-1/2})
   (1+e^{-2\pi\sqrt{-1}v}q^{n-1/2}).\label{eq:theta3}
\end{align}
These conventions agree with \cite[Section 2]{LiuWang} and
\cite[Section 1]{ChenHan}.  Define
\begin{align}
 \delta_1(\tau)&=\frac18\bigl(\theta_2(0,\tau)^4+
                                      \theta_3(0,\tau)^4\bigr),
 &\varepsilon_1(\tau)&=\frac1{16}\theta_2(0,\tau)^4
                                      \theta_3(0,\tau)^4,
 \label{eq:delta1-def}\\
 \delta_2(\tau)&=-\frac18\bigl(\theta_1(0,\tau)^4+
                                      \theta_3(0,\tau)^4\bigr),
 &\varepsilon_2(\tau)&=\frac1{16}\theta_1(0,\tau)^4
                                      \theta_3(0,\tau)^4.
 \label{eq:delta2-def}
\end{align}
Direct multiplication of the products gives
\begin{align}
 \delta_1&=\frac14+6q+6q^2+O(q^3),
 &\varepsilon_1&=\frac1{16}-q+7q^2+O(q^3),
 \label{eq:delta1-exp}\\
 \delta_2&=-\frac18-3q^{1/2}-3q-12q^{3/2}+O(q^2),
 &\varepsilon_2&=q^{1/2}+8q+28q^{3/2}+O(q^2).
 \label{eq:delta2-exp}
\end{align}

Let
\[
\begin{aligned}
 \Gamma_0(2)&=\left\{
 \begin{pmatrix}a&b\\c&d\end{pmatrix}\in SL_2(\mathbb Z):
 c\equiv0\pmod2\right\},\\
 \Gamma^0(2)&=\left\{
 \begin{pmatrix}a&b\\c&d\end{pmatrix}\in SL_2(\mathbb Z):
 b\equiv0\pmod2\right\}.
\end{aligned}
\]
The following standard structure statement is used only to obtain the finite
expansion in \eqref{eq:R2-basis}.  It is proved, with the same normalizations,
in \cite[Lemma 2.2]{LiuCMP}; see also \cite[Section 1.2]{ChenHan}.

\begin{proposition}[Structure of the level-two rings]\label{prop:mod-rings}
The modular forms with real Fourier coefficients, trivial character, and
holomorphic expansions at every cusp for $\Gamma_0(2)$ are generated by $\delta_1$ and
$\varepsilon_1$, of weights $2$ and $4$, respectively.  The real modular
forms for $\Gamma^0(2)$ are generated by $\delta_2$ and $\varepsilon_2$, with
the same weights.  Under $S:\tau\mapsto-1/\tau$,
\begin{equation}\label{eq:delta-transform}
 \delta_2(-1/\tau)=\tau^2\delta_1(\tau),
 \qquad
 \varepsilon_2(-1/\tau)=\tau^4\varepsilon_1(\tau).
\end{equation}
\end{proposition}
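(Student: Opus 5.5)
The paper cites this result to \cite[Lemma 2.2]{LiuCMP}, but it can be verified directly, and I would organize the check in three steps. First, the \(S\)-transformation laws \eqref{eq:delta-transform} follow from the transformation of Jacobi theta constants. Recall \(\theta(v,\tau)\), \(\theta_1,\theta_2,\theta_3\) in \eqref{eq:theta0}--\eqref{eq:theta3}. These are, up to the product normalization, the classical \(\vartheta_1,\vartheta_2,\vartheta_4,\vartheta_3\). Under \(\tau\mapsto-1/\tau\) one has, for the theta constants, \(\theta_1(0,-1/\tau)^4=-\tau^2\theta_2(0,\tau)^4\) and \(\theta_3(0,-1/\tau)^4=-\tau^2\theta_3(0,\tau)^4\). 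Both follow from the Jacobi imaginary transformation \(\vartheta_3(0,-1/\tau)=\sqrt{-\sqrt{-1}\tau}\,\vartheta_3(0,\tau)\) and \(\vartheta_2\leftrightarrow\vartheta_4\). The sign conventions must be fixed so that \(\delta_2=-\frac18(\theta_1^4+\theta_3^4)\) maps to \(+\tau^2\delta_1\). Substituting into \eqref{eq:delta1-def}--\eqref{eq:delta2-def} gives \(\delta_2(-1/\tau)=\tau^2\delta_1(\tau)\). Since \(\varepsilon\) involves the product of two fourth powers, the signs cancel and \(\varepsilon_2(-1/\tau)=\tau^4\varepsilon_1(\tau)\). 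I would check this against the expansions \eqref{eq:delta1-exp}--\eqref{eq:delta2-exp}, in particular the constant terms \(\tfrac14\) and \(-\tfrac18\), to confirm the normalization.

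Second, I would show that \(\delta_1,\varepsilon_1\) are modular for \(\Gamma_0(2)\) and holomorphic at both cusps \(\infty\) and \(0\). The group \(\Gamma_0(2)\) is generated by \(T\) and \(ST^2S\), or equivalently \(\pm\begin{pmatrix}1&0\\2&1\end{pmatrix}\). Invariance under \(T\) is immediate from the \(q\)-expansions. Invariance under \(\begin{pmatrix}1&0\\2&1\end{pmatrix}\) follows from the rules \(\theta_2^4\mapsto\theta_2^4\) and \(\theta_3^4\mapsto\theta_3^4\) up to the factor \(\tau^2\), or rather from the permutation action of \(SL_2(\mathbb Z)\) on the three fourth powers, with \(\theta_1^4\) the one fixed by \(\Gamma_0(2)\) in this labelling. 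Holomorphy at the cusp \(0\) is transported by \(S\) to the \(q^{1/2}\)-expansions of \(\delta_2,\varepsilon_2\), which have no negative powers. The \(\Gamma^0(2)\) statement follows by conjugating with \(S\), since \(S\Gamma_0(2)S^{-1}=\Gamma^0(2)\), together with the relations already proved.

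Third, and this is the main step, I would prove that these forms generate the ring. The standard route is a dimension count: \(\Gamma_0(2)\) has index \(3\) in \(SL_2(\mathbb Z)\), two cusps, one elliptic point of order \(2\), and genus \(0\). Riemann--Roch, or the valence formula with total order \(k/4\) for weight \(k\), then gives \(\dim M_{2m}(\Gamma_0(2))=\lfloor m/2\rfloor+1\). Separately, the monomials \(\delta_1^{i}\varepsilon_1^{j}\) with \(2i+4j=2m\) number \(\lfloor m/2\rfloor+1\). They are linearly independent, because \(\varepsilon_2=q^{1/2}+\cdots\) vanishes at the cusp \(\infty\) of \(\Gamma^0(2)\) while \(\delta_2\) does not. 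So the monomials \(\delta_2^{m-2j}\varepsilon_2^{j}\) have distinct leading orders \(q^{j/2}\) and form a triangular system, and this independence transfers to \(\delta_1,\varepsilon_1\) via \(S\).

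Odd weights vanish because \(-I\in\Gamma_0(2)\). Real coefficients are preserved, since the generators have rational expansions. The expected obstacle is the careful bookkeeping of signs and eighth roots of unity in the theta transformations; I would settle these by matching the leading terms of the \(q\)-expansions.
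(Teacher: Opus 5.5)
Your proposal is correct and follows essentially the same route as the paper. Both use Jacobi theta-constant transformations for the $S$ relations, the generators $T$ and $ST^2S$ (with conjugation by $S$ for $\Gamma^0(2)$) for modularity, and the triangular leading terms $q^{j/2}$ of $\delta_2^{m-2j}\varepsilon_2^{j}$ together with the valence formula for generation, transferred back by $S$. The one difference is cosmetic: you quote the full dimension $\lfloor m/2\rfloor+1$ from genus, cusps and elliptic points, whereas the paper needs only the valence bound on the order of vanishing at the cusp plus successive subtraction (and a small slip: the automorphy factor for $\begin{pmatrix}1&0\\2&1\end{pmatrix}$ is $(2\tau+1)^2$, not $\tau^2$).
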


\begin{proof}
The theta transformation laws interchange $\theta_1$ and $\theta_2$
under $S$, leave $\theta_3$ fixed, and multiply each theta constant by
$(-\sqrt{-1}\tau)^{1/2}$.  Substitution into
\eqref{eq:delta1-def}--\eqref{eq:delta2-def} proves
\eqref{eq:delta-transform}.  Under $\tau\mapsto\tau+1$, the constants
$\theta_2$ and $\theta_3$ are interchanged.  Combining these laws with
the generators $T,ST^2S$ of $\Gamma_0(2)$, up to the central element,
gives the asserted weights and groups.  The product expansions are
holomorphic at infinity; the $S$ relations verify holomorphy at the
other cusp.  Conjugation by $S$ gives the corresponding statements for
$\Gamma^0(2)$.

For completeness, fix weight $2k$ and put $m=\lfloor k/2\rfloor$.
The $m+1$ forms
\[
 f_r=(8\delta_2)^{k-2r}\varepsilon_2^r,
 \qquad 0\leq r\leq m,
\]
have leading terms $(-1)^kt^r$, where $t=q^{1/2}$ is the local
parameter at the infinity cusp of $\Gamma^0(2)$.  They are therefore
linearly independent.  The valence formula for this index-three group
bounds the order at that cusp of a nonzero holomorphic weight-$2k$
form by $(2k)\cdot3/12=k/2$.  Given any such form, successive
subtraction of multiples of $f_0,\ldots,f_m$ kills its coefficients
through $t^m$.  The remainder has order at least $m+1>k/2$ and must
vanish.  Thus these forms span the weight-$2k$ space.  The elimination
uses real coefficients when the initial Fourier coefficients are real.
Applying the $S$ relation proves the ring statement on the other group.
\end{proof}

\section{Theta bundles, modularity, and Fourier coefficients}
\label{sec:theta-bundles}

Let $V\to M$ be a real oriented Euclidean bundle of even rank $2l$, and
write $V_{\mathbb C}$ for its complexification. Liu and Wang introduced the
following elements of $K(M)[[q^{1/2}]]$ in
\cite[equations (3.1)--(3.2)]{LiuWang}:
\begin{align}
 \Theta_1(T_{\mathbb C}M,V_{\mathbb C},a,b)
 :={}&\bigotimes_{n=1}^{\infty}S_{q^n}(\widetilde{T_{\mathbb C}M})
 \otimes\left(\bigotimes_{m=1}^{\infty}
      \Lambda_{q^m}(\widetilde{V_{\mathbb C}})\right)^{\!a}
 \notag\\
 &\otimes\left(\bigotimes_{r=1}^{\infty}
      \Lambda_{q^{r-1/2}}(\widetilde{V_{\mathbb C}})\right)^{\!b}
 \otimes\left(\bigotimes_{s=1}^{\infty}
      \Lambda_{-q^{s-1/2}}(\widetilde{V_{\mathbb C}})\right)^{\!b},
 \label{eq:Theta1-general}\\
 \Theta_2(T_{\mathbb C}M,V_{\mathbb C},a,b)
 :={}&\bigotimes_{n=1}^{\infty}S_{q^n}(\widetilde{T_{\mathbb C}M})
 \otimes\left(\bigotimes_{m=1}^{\infty}
      \Lambda_{q^m}(\widetilde{V_{\mathbb C}})\right)^{\!b}
 \notag\\
 &\otimes\left(\bigotimes_{r=1}^{\infty}
      \Lambda_{q^{r-1/2}}(\widetilde{V_{\mathbb C}})\right)^{\!b}
 \otimes\left(\bigotimes_{s=1}^{\infty}
      \Lambda_{-q^{s-1/2}}(\widetilde{V_{\mathbb C}})\right)^{\!a},
 \label{eq:Theta2-general}
\end{align}
where $a,b$ are integers in the formal power-series construction.
We restrict to $a,b\geq0$ so that the spinor powers entering the index
pairings are actual tensor powers.  The reduced bundles and Fourier
coefficients themselves are integral virtual $K$-classes.

Specializing to $V=TM$ gives $\widetilde{V_{\mathbb C}}=E$ and $l=2k$.
Define the scalar series
\begin{align}
 R_1^{a,b}(\tau)
 &=\left\langle\Ahat(TM)
      \ch(\cS(M)^{\otimes a})
      \ch(\Theta_1(\T,\T,a,b)),[M]\right\rangle,
 \label{eq:R1-def}\\
 R_2^{a,b}(\tau)
 &=\left\langle\Ahat(TM)
      \ch(\cS(M)^{\otimes b})
      \ch(\Theta_2(\T,\T,a,b)),[M]\right\rangle.
 \label{eq:R2-def}
\end{align}

The modular input is the following specialization of the Liu--Wang
construction.  We include its theta-function proof to specify both the
normalization and the cohomological use of the string condition.

\begin{theorem}\label{thm:liu-wang-special}
Let $M^{4k}$ be a closed string manifold.  For nonnegative $a,b$, the series
$R_1^{a,b}$ and $R_2^{a,b}$ are modular forms of weight $2k$ over
$\Gamma_0(2)$ and $\Gamma^0(2)$, respectively, and
\begin{equation}\label{eq:LW-transform}
 R_1^{a,b}(-1/\tau)
 =2^{(a-b)2k}\tau^{2k}R_2^{a,b}(\tau).
\end{equation}
\end{theorem}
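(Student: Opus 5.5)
The plan is to express both series as integrals of explicit theta-function quotients via the splitting principle, and then apply the theta transformation laws factor by factor. Write the Chern roots of $T_{\mathbb C}M$ as $\{\pm 2\pi\sqrt{-1}x_j\}$. Since $\widetilde V_{\mathbb C}=E$ here, each factor of $\Theta_1,\Theta_2$ has Chern character a product over $j$ of standard expressions. For example,
\[
\ch\Bigl(\bigotimes_n S_{q^n}(E)\Bigr)=\prod_j\prod_n\frac{(1-q^n)^2}{(1-e^{2\pi\sqrt{-1}x_j}q^n)(1-e^{-2\pi\sqrt{-1}x_j}q^n)}
\]
and
\[
\ch\Bigl(\bigotimes_m\Lambda_{q^m}(E)\Bigr)=\prod_j\prod_m\frac{(1+e^{2\pi\sqrt{-1}x_j}q^m)(1+e^{-2\pi\sqrt{-1}x_j}q^m)}{(1+q^m)^2}.
\]
Similarly, $\Lambda_{\pm q^{r-1/2}}$ produces $\theta_3,\theta_2$ ratios. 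Comparing with \eqref{eq:theta0}--\eqref{eq:theta3}, and absorbing $\Ahat(TM)$ and $\ch(\cS(M))=\prod_j 2\cos(\pi x_j)$ (in these normalized variables), I will obtain
\[
R_1^{a,b}(\tau)=\Bigl\langle\prod_{j=1}^{2k}x_j\frac{\theta'(0,\tau)}{\theta(x_j,\tau)}\Bigl(\frac{\theta_1(x_j,\tau)}{\theta_1(0,\tau)}\Bigr)^{a}\Bigl(\frac{\theta_2(x_j,\tau)\theta_3(x_j,\tau)}{\theta_2(0,\tau)\theta_3(0,\tau)}\Bigr)^{b}2^{a},[M]\Bigr\rangle,
\]
with the analogous formula for $R_2^{a,b}$ in which $\theta_1^a$ is replaced by $\theta_2^a$ (with the prefactor $2^b$ coming from $\cS^{\otimes b}$) and $(\theta_2\theta_3)^b$ is replaced by $(\theta_1\theta_3)^b$. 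The factor $2^{a}$ comes from $\cos(\pi x)=\theta_1(x)/\theta_1(0)$ up to the $2\cos$ normalization, and it accounts for the $2^{(a-b)2k}$ in \eqref{eq:LW-transform}. Keeping these powers of two straight is bookkeeping that I would check carefully against $\theta_1(0,\tau)=2q^{1/8}\prod(1-q^n)(1+q^n)^2$.

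Next I apply the standard Jacobi transformations: $\theta(v/\tau,-1/\tau)=\frac1{\sqrt{-1}}(\tau/\sqrt{-1})^{1/2}e^{\pi\sqrt{-1}v^2/\tau}\theta(v,\tau)$, $\theta_1(v/\tau,-1/\tau)=(\tau/\sqrt{-1})^{1/2}e^{\pi\sqrt{-1}v^2/\tau}\theta_2(v,\tau)$, the reverse for $\theta_2$, and $\theta_3\mapsto\theta_3$. Under $\tau\mapsto\tau+1$, $\theta$ and $\theta_1$ acquire the phase $e^{\pi\sqrt{-1}/4}$ and $\theta_2\leftrightarrow\theta_3$. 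Each ratio $\theta_\ast(x_j)/\theta_\ast(0)$ therefore transforms into the partner ratio times the Gaussian factor $e^{\pi\sqrt{-1}x_j^2/\tau}$. The full integrand for $R_1(-1/\tau)$, evaluated at $x_j/\tau$, acquires the exponential $\exp\bigl(\pi\sqrt{-1}(a+2b-1)\sum_jx_j^2/\tau\bigr)$, apart from the $\theta$ denominator sign conventions.

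The key point, and the step where the string hypothesis enters, is this exponential. We have $\sum_j x_j^2$ proportional to $p_1(TM)$, which vanishes rationally by \eqref{eq:string-p1}, so the exponential equals $1$ in $H^*(M;\mathbb Q)$. The overall degree-$4k$ part picks up $\tau^{2k}$ from the rescaling $x_j\mapsto x_j/\tau$, because the top-degree component scales homogeneously. This gives \eqref{eq:LW-transform}, with the $2^{a}$ versus $2^{b}$ prefactors producing $2^{(a-b)2k}$ once $\theta_1(0)$ and $\theta_2(0)$ normalizations are matched. Invariance under $T$ for $R_1$ and the generator $ST^2S$ then follows in the same way, as in the proof of Proposition~\ref{prop:mod-rings}, since $\theta_1$ and $\theta_2\theta_3$ are preserved up to phases that cancel in the ratios. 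The $\Gamma^0(2)$ statement for $R_2$ is obtained by $S$-conjugation. Holomorphy at the cusps is immediate from the $q$-expansions in $K(M)[[q^{1/2}]]$ together with the $S$ relation.

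The main obstacle is getting the exact constants right: the power $2^{(a-b)2k}$ and the signs and phases of $\sqrt{-1}$. I would fix these by evaluating both sides at the constant term in $q$ on a single root factor before taking products.
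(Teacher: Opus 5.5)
Your proposal is correct and follows essentially the same route as the paper, which likewise writes the integrands as $2^{2ka}\prod_j H J_1^aJ_2^bJ_3^b$ and $2^{2kb}\prod_j HJ_1^bJ_2^aJ_3^b$ with $H=v\theta'(0,\tau)/\theta(v,\tau)$ and $J_\nu=\theta_\nu(v,\tau)/\theta_\nu(0,\tau)$. It then applies the Jacobi $S$-laws to produce the Gaussian factor with exponent proportional to $(a+2b-1)\sum_j v_j^2$, removes that factor using $p_1(TM)=0$ in cohomology, and obtains $\tau^{2k}$ from homogeneity, with $T$- and $T^2$-invariance plus the $S$ relations handling the level-two groups. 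The only points you leave implicit are that $ST^2S$-invariance of $R_1$ comes from the $S$ relation combined with $R_2(\tau+2)=R_2(\tau)$, and that holomorphy on $\mathbb H$ uses the nonvanishing of $\theta'(0,\tau)$ and of the theta constants in the denominators.
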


\begin{proof}
Let $v_j=x_j/(2\pi\sqrt{-1})$ and define
\[
 H(v,\tau)=v\frac{\theta'(0,\tau)}{\theta(v,\tau)},\qquad
 J_\nu(v,\tau)=\frac{\theta_\nu(v,\tau)}{\theta_\nu(0,\tau)}.
\]
The products defining the theta bundles give the total characteristic
classes
\begin{align*}
 \Phi_1(x,\tau)&=2^{2ka}\prod_{j=1}^{2k}
 H(v_j,\tau)J_1(v_j,\tau)^aJ_2(v_j,\tau)^bJ_3(v_j,\tau)^b,\\
 \Phi_2(x,\tau)&=2^{2kb}\prod_{j=1}^{2k}
 H(v_j,\tau)J_1(v_j,\tau)^bJ_2(v_j,\tau)^aJ_3(v_j,\tau)^b.
\end{align*}
Here $R_i^{a,b}=\langle\Phi_i^{(4k)},[M]\rangle$.  Indeed, the
constant term of $H$ is $(x/2)/\sinh(x/2)$, and that of $J_1$ is
$\cosh(x/2)$; the factors $2^{2ka}$ and $2^{2kb}$ account for the
spinor characters.

The Jacobi transformation formulas imply
\begin{align*}
 H(v,-1/\tau)&=e^{-\pi\sqrt{-1}\tau v^2}H(\tau v,\tau),\\
 J_1(v,-1/\tau)&=e^{\pi\sqrt{-1}\tau v^2}J_2(\tau v,\tau),\\
 J_2(v,-1/\tau)&=e^{\pi\sqrt{-1}\tau v^2}J_1(\tau v,\tau),\\
 J_3(v,-1/\tau)&=e^{\pi\sqrt{-1}\tau v^2}J_3(\tau v,\tau).
\end{align*}
Consequently
\begin{equation}\label{eq:root-modular-transform}
 \Phi_1(x,-1/\tau)=2^{2k(a-b)}
 e^{\pi\sqrt{-1}\tau(a+2b-1)\sum_jv_j^2}\Phi_2(\tau x,\tau).
\end{equation}
Since $\sum_jx_j^2=p_1(TM)=0$ in cohomology, the exponential factor
equals $1$ in cohomology.  This assertion does not require the
Pontryagin differential form of the Levi--Civita connection to vanish
pointwise.  If that form is $d\eta$, then for every closed form $\omega$
and every $j\geq1$,
\[
 p_1^j\wedge\omega=d(\eta\wedge p_1^{j-1}\wedge\omega),
 \qquad \int_Mp_1^j\wedge\omega=0.
\]
Thus all nonconstant exponential terms disappear after integration.
Replacing each degree-two root $x_j$ by $\tau x_j$ multiplies the
degree-$4k$ term by $\tau^{2k}$, proving \eqref{eq:LW-transform}.
The inverse $S$ relation follows in the same way.

The normalized theta ratios show that $R_1^{a,b}(\tau+1)=R_1^{a,b}(\tau)$
and $R_2^{a,b}(\tau+2)=R_2^{a,b}(\tau)$.  Together with the two
$S$ relations, these identities give the required transformation laws
on $\Gamma_0(2)$ and $\Gamma^0(2)$.  The theta constants in the
denominators have no zeros on the upper half-plane, and the factor $v$
removes the simple zero of $\theta(v,\tau)$ at $v=0$.  The Taylor
coefficients in the roots are therefore holomorphic.  The bundle
products have no negative Fourier powers at infinity, and the $S$
relations give the same property at the other cusp.  This proves
holomorphy at all cusps.

Equivalently, in the general formula of
\cite[Theorem 3.1 and equation (3.15)]{LiuWang}, the anomaly class is
$p_1(TM)-(a+2b)p_1(V)$ and the scalar is $2^{(a-b)l}$.
The specialization $V=TM$, $l=2k$ gives exactly the formula above.
\end{proof}

We next perform the $K$-theory calculation underlying all later results.

Write $t=q^{1/2}$. For an integer $c\geq0$, the binomial expansion of
\eqref{eq:lambda-first} gives
\begin{align}
 \Lambda_t(E)^c
 &=1+cEt+\left(c\Lambda^2E+\binom c2E\otimes E\right)t^2
   +O(t^3),\label{eq:lambda-power-plus}\\
 \Lambda_{-t}(E)^c
 &=1-cEt+\left(c\Lambda^2E+\binom c2E\otimes E\right)t^2
   +O(t^3).
 \label{eq:lambda-power-minus}
\end{align}
Indeed, a term of degree two is obtained either from the degree-two term in
one factor, giving $c\Lambda^2E$, or from the degree-one terms in two distinct
factors, giving $\binom c2E\otimes E$. Moreover, by \eqref{eq:tensor-decomp}, we get
\begin{align}
 \Lambda_t(E)^b\Lambda_{-t}(E)^b
 &=1+\Bigl(2b\Lambda^2E+2\binom b2E\otimes E-b^2E\otimes E\Bigr)t^2
 +O(t^3)\notag\\
 &=1+b(2\Lambda^2E-E\otimes E)t^2+O(t^3)\notag\\
 &=1+b(\Lambda^2E-S^2E)t^2+O(t^3).
 \label{eq:opposite-product}
\end{align}
These calculations give the following coefficient formulas.
\begin{proposition}\label{prop:theta-coefficients}
For $V=TM$ and $E=\widetilde{T_{\mathbb C}M}$, the theta bundles have
expansions
\begin{align}
 \Theta_1(\T,\T,a,b)
 &=1+\cA_{a,b}q+O(q^2),\label{eq:Theta1-exp}\\
 \Theta_2(\T,\T,a,b)
 &=1+\cB^{(1)}_{a,b}q^{1/2}
       +\cB^{(2)}_{a,b}q+O(q^{3/2}),
 \label{eq:Theta2-exp}
\end{align}
where
\begin{align}
 \cA_{a,b}&=(1+a)E+b(\Lambda^2E-S^2E),\label{eq:A-def}\\
 \cB^{(1)}_{a,b}&=(b-a)E,\label{eq:B1-def}\\
 \cB^{(2)}_{a,b}&=(1+b)E+(a+b)\Lambda^2E
 +\frac{(a-b)^2-(a+b)}2E\otimes E.
 \label{eq:B2-def}
\end{align}
\end{proposition}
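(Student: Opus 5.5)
The plan is a direct expansion of the infinite products in \eqref{eq:Theta1-general} and \eqref{eq:Theta2-general} with $V=TM$, so $\widetilde{V_{\mathbb C}}=E$, keeping only the terms through order $q$. Write $t=q^{1/2}$. Every factor has the form $1+O(\text{positive power of }q)$, so only finitely many factors contribute to each coefficient. I will list the factors that contribute up to order $q$, expand each with \eqref{eq:lambda-first}, \eqref{eq:s-first}, \eqref{eq:lambda-power-plus}, \eqref{eq:lambda-power-minus} and \eqref{eq:opposite-product}, and multiply.

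For $\Theta_1$, the only factors with a $q^1$ term are:
\begin{itemize}
\item $S_q(E)=1+Eq+O(q^2)$;
\item $\Lambda_q(E)^a=1+aEq+O(q^2)$;
\item the $r=s=1$ pair $\Lambda_{t}(E)^b\Lambda_{-t}(E)^b$, which by \eqref{eq:opposite-product} equals $1+b(\Lambda^2E-S^2E)q+O(q^{3/2})$.
\end{itemize}
The odd $t$-terms cancel in this pair by the symmetry $t\mapsto-t$, so it contributes no $q^{1/2}$ term. All factors with $n,m\ge2$ or $r,s\ge2$ are $1+O(q^{3/2})$. Since each of the three factors above starts with $1$, their product to order $q$ is $1$ plus the sum of the three linear corrections. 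This gives $1+\cA_{a,b}q$, which is \eqref{eq:Theta1-exp}. There is no cross term at order $q$ because no factor has a $q^{1/2}$ term.

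For $\Theta_2$, the relevant factors are:
\begin{itemize}
\item $S_q(E)=1+Eq$;
\item $\Lambda_q(E)^b=1+bEq$;
\item $\Lambda_t(E)^b=1+bEt+\bigl(b\Lambda^2E+\binom b2E\otimes E\bigr)t^2$;
\item $\Lambda_{-t}(E)^a=1-aEt+\bigl(a\Lambda^2E+\binom a2E\otimes E\bigr)t^2$.
\end{itemize}
All are taken modulo $O(t^3)$. The $t$-coefficient is $(b-a)E=\cB^{(1)}_{a,b}$.

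The $t^2$-coefficient collects four contributions:
\begin{itemize}
\item the linear terms $(1+b)E$ from the first two factors;
\item $(a+b)\Lambda^2E$;
\item $\bigl(\binom a2+\binom b2\bigr)E\otimes E$;
\item the cross term $-abE\otimes E$.
\end{itemize}
The only point requiring a check is the scalar identity
\[
\binom a2+\binom b2-ab=\frac{a^2+b^2-a-b-2ab}{2}=\frac{(a-b)^2-(a+b)}2,
\]
which yields $\cB^{(2)}_{a,b}$ exactly as in \eqref{eq:B2-def}.

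I do not expect a real obstacle here. The only places needing care are the following.
\begin{itemize}
\item Products of reduced classes (for instance $E\cdot E=E\otimes E$) are taken in the ring $K(M)$, and the binomial formulas \eqref{eq:lambda-power-plus}--\eqref{eq:lambda-power-minus} are valid in the completed ring $K(M)[[t]]$ because $E$ is a virtual class.
\item I must verify that no other factor contributes at orders $t$ or $t^2$. This holds because the exponents $n,m\ge1$ and $r-\tfrac12,\,s-\tfrac12\ge\tfrac12$ mean that any factor other than those listed starts at order $t^3$ or higher.
\end{itemize}
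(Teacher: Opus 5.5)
Your proposal follows the paper's proof essentially step for step: the same truncation to the $n=m=r=s=1$ factors, the same use of \eqref{eq:opposite-product} for $\Theta_1$, and the same bookkeeping $\binom a2+\binom b2-ab$ for the $t^2$ coefficient of $\Theta_2$. The one point to tighten is the error term in \eqref{eq:Theta1-exp}: you only show that the remaining factors are $1+O(q^{3/2})$, which gives an $O(q^{3/2})$ remainder, so to get $O(q^2)$ you should apply the $t\mapsto -t$ symmetry to the whole paired product $\prod_r\Lambda_{q^{r-1/2}}(E)^b\prod_s\Lambda_{-q^{s-1/2}}(E)^b$ (the substitution swaps the two products, which carry the same exponent $b$), as the paper does, so that only integral powers of $q$ occur.
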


\begin{proof}
We calculate the two products separately.  Up to order $q=t^2$, only the
$n=m=1$ integral factors and the $r=s=1$ half-integral factors can
contribute.

For $\Theta_1$, the integral factors are
\begin{align*}
 \prod_{n\geq1}S_{q^n}(E)&=1+Eq+O(q^2),\\
 \left(\prod_{m\geq1}\Lambda_{q^m}(E)\right)^a
 &=1+aEq+O(q^2).
\end{align*}
The two half-integral products have equal exponent $b$.  Their linear terms
cancel, and \eqref{eq:opposite-product} gives
\[
 \left(\prod_{r\geq1}\Lambda_{q^{r-1/2}}(E)\right)^b
 \left(\prod_{s\geq1}\Lambda_{-q^{s-1/2}}(E)\right)^b
 =1+b(\Lambda^2E-S^2E)q+O(q^2).
\]
Multiplying the three displayed expressions proves \eqref{eq:A-def}.
The remainder is $O(q^2)$ because each paired half-integral product
is unchanged under $t\mapsto-t$, and therefore contains only even
powers of $t$.

For $\Theta_2$, the integral factors give
\[
 \left(1+Eq+O(q^2)\right)
 \left(1+bEq+O(q^2)\right)
 =1+(1+b)Eq+O(q^2).
\]
The half-integral factors now have exponents $b$ and $a$ and opposite signs.
Using \eqref{eq:lambda-power-plus}--\eqref{eq:lambda-power-minus}, their
product is
\begin{align*}
 &\left[1+bEt+\left(b\Lambda^2E+\binom b2E^{\otimes2}\right)t^2\right]
 \left[1-aEt+\left(a\Lambda^2E+\binom a2E^{\otimes2}\right)t^2\right]
 +O(t^3)\\
 &\quad=1+(b-a)Et\\
 &\qquad+\left[(a+b)\Lambda^2E+
     \left(\binom a2+\binom b2-ab\right)E^{\otimes2}\right]t^2+O(t^3).
\end{align*}
Finally,
\[
 \binom a2+\binom b2-ab
 =\frac{a^2-a+b^2-b-2ab}{2}
 =\frac{(a-b)^2-(a+b)}2.
\]
Adding the integral contribution proves \eqref{eq:B1-def} and
\eqref{eq:B2-def}.
\end{proof}

Applying Definition~\ref{def:I-r} coefficientwise gives
\begin{align}
 R_1^{a,b}(\tau)
 &=\cI_a(1)+\cI_a(\cA_{a,b})q+O(q^2),
 \label{eq:R1-coeff}\\
 R_2^{a,b}(\tau)
 &=\cI_b(1)+\cI_b(\cB^{(1)}_{a,b})q^{1/2}
   +\cI_b(\cB^{(2)}_{a,b})q+O(q^{3/2}).
 \label{eq:R2-coeff}
\end{align}

Put $T=T_{\mathbb C}M$ and
$d=\dim M$.  From \eqref{eq:lambda-relations},
\begin{align}
 \Lambda^2E&=\Lambda^2T-dT+\binom{d+1}{2}\mathbb C,
 \label{eq:lambda-reduced}\\
 S^2E&=S^2T-dT+\binom d2\mathbb C,
 \label{eq:s-reduced}\\
 E\otimes E&=T\otimes T-2dT+d^2\mathbb C.
 \label{eq:tensor-reduced}
\end{align}
For example, subtracting \eqref{eq:s-reduced} from
\eqref{eq:lambda-reduced} gives
\begin{equation}\label{eq:diff-reduced}
             \Lambda^2E-S^2E=\Lambda^2T-S^2T+d\mathbb C.
\end{equation}
Consequently
\begin{equation}\label{eq:A-unreduced}
 \cA_{a,b}=(1+a)T+b(\Lambda^2T-S^2T)
               +d(b-a-1)\mathbb C.
\end{equation}
If
\begin{equation}\label{eq:c-ab}
               c_{a,b}:=\frac{(a-b)^2-(a+b)}2,
\end{equation}
then
\begin{align}
 \cB^{(2)}_{a,b}
 ={}&(a+b)\Lambda^2T+c_{a,b}T\otimes T\notag\\
 &+\bigl(1+b-d(a+b)-2dc_{a,b}\bigr)T\notag\\
 &+\left[-d(1+b)+(a+b)\binom{d+1}{2}+c_{a,b}d^2\right]\mathbb C.
 \label{eq:B2-unreduced}
\end{align}
The dimension-specific formulas below follow from
\eqref{eq:A-unreduced} and \eqref{eq:B2-unreduced}.

\section{The modular coefficient comparison}\label{sec:comparison}
In this section we prove Theorem~\ref{thm:main-intro}.
\subsection{Triangular coefficients on the
\texorpdfstring{$\Gamma^0(2)$}{Gamma-zero(2)} side}

Since $R_2^{a,b}$ has weight $2k$, Proposition~\ref{prop:mod-rings} gives
numbers $h_r=h_r(M;a,b)$ such that
\begin{equation}\label{eq:R2-basis}
 R_2^{a,b}(\tau)=
 \sum_{r=0}^{\lfloor k/2\rfloor}
 h_r(8\delta_2)^{k-2r}\varepsilon_2^r.
\end{equation}
By \eqref{eq:LW-transform} and \eqref{eq:delta-transform},
\begin{equation}\label{eq:R1-basis}
 R_1^{a,b}(\tau)=2^{(a-b)2k}
 \sum_{r=0}^{\lfloor k/2\rfloor}
 h_r(8\delta_1)^{k-2r}\varepsilon_1^r.
\end{equation}
Set $t=q^{1/2}$, from \eqref{eq:delta2-exp},
\begin{equation}\label{eq:8delta2}
 8\delta_2=-(1+24t+24t^2)+O(t^3),
 \qquad
 \varepsilon_2=t+8t^2+O(t^3).
\end{equation}
For an integer $n\geq0$, the binomial theorem gives
\begin{align}
 (8\delta_2)^n
 &=(-1)^n\left[1+24nt+
 \left(24n+\binom n2 24^2\right)t^2\right]+O(t^3)\notag\\
 &=(-1)^n\left[1+24nt+24n(12n-11)t^2\right]+O(t^3).
 \label{eq:delta2-power}
\end{align}
Only $r=0,1,2$ can contribute through $t^2$. For \(r=0\), the corresponding expansion follows by setting \(n=k\) in the preceding formula. The $r=1$ term
is present only if $k\geq2$; in that case,
\begin{align}
 (8\delta_2)^{k-2}\varepsilon_2
 &=(-1)^k\bigl[1+24(k-2)t+O(t^2)\bigr]
                   \bigl[t+8t^2+O(t^3)\bigr]\notag\\
 &=(-1)^k\bigl[t+(24k-40)t^2\bigr]+O(t^3),
 \label{eq:r1-product}
\end{align}
and the $r=2$ term, present only if $k\geq4$, satisfies
\begin{equation}\label{eq:r2-product}
 (8\delta_2)^{k-4}\varepsilon_2^2=(-1)^k t^2+O(t^3).
\end{equation}

\begin{lemma}\label{lem:h-relations}
For $1\leq k\leq7$, set $h_r=0$ whenever $r>\lfloor k/2\rfloor$.
The first three index coefficients of $R_2^{a,b}$ satisfy
\begin{align}
 \cI_b(1)&=(-1)^k h_0,\label{eq:h0-rel}\\
 \cI_b(\cB^{(1)}_{a,b})
 &=(-1)^k(24kh_0+h_1),\label{eq:h1-rel}\\
 \cI_b(\cB^{(2)}_{a,b})
 &=(-1)^k\bigl(24k(12k-11)h_0+(24k-40)h_1+h_2\bigr).
 \label{eq:h2-rel}
\end{align}
In particular, $h_1$ is omitted for $k=1$, and $h_2$ is omitted for
$k<4$.  Every $h_r$ in the expansion for $k\leq7$ is an integer.
\end{lemma}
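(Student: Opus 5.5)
The relations are obtained by comparing the coefficients of $t^0,t^1,t^2$ ($t=q^{1/2}$) in two expressions for $R_2^{a,b}$. The first is the index expansion \eqref{eq:R2-coeff}. The second is the basis expansion \eqref{eq:R2-basis}, with its terms expanded by \eqref{eq:delta2-power}, \eqref{eq:r1-product} and \eqref{eq:r2-product}. Integrality of the $h_r$ then follows from the resulting triangular system.

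First we expand the basis side. The $r=0$ term contributes $(-1)^k[1+24kt+24k(12k-11)t^2]$, by \eqref{eq:delta2-power} with $n=k$. The $r=1$ term exists only for $k\ge2$ and contributes $(-1)^k[t+(24k-40)t^2]$, by \eqref{eq:r1-product}. The $r=2$ term exists only for $k\ge4$ and contributes $(-1)^kt^2$, by \eqref{eq:r2-product}. Every term with $r\ge3$ starts at $t^3$, because $\varepsilon_2^r=t^r+\cdots$ and $8\delta_2$ has nonzero constant term. The same vanishing order justifies the convention $h_r=0$ for $r>\lfloor k/2\rfloor$: for example, when $k=1$ there is no $r=1$ term, and the $t$-coefficient is simply $(-1)^k\,24k\,h_0$. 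Matching against $\cI_b(1)$, $\cI_b(\cB^{(1)}_{a,b})$ and $\cI_b(\cB^{(2)}_{a,b})$ yields \eqref{eq:h0-rel}--\eqref{eq:h2-rel}.

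One small check concerns the $t^2$ coefficient of $(8\delta_2)^n$. It equals $24n+\binom n2 24^2=24n(1+12(n-1))=24n(12n-11)$, in agreement with \eqref{eq:delta2-power}. This is the routine part.

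The remaining step, integrality, is the main point. For $k\le7$ we have $\lfloor k/2\rfloor\le3$, so $h_0,\dots,h_3$ are the only possible coefficients. The system is unitriangular over $\mathbb Z$ in the coefficients of $t^0,\dots,t^3$, because $f_r=(8\delta_2)^{k-2r}\varepsilon_2^r$ has leading term $(-1)^kt^r$. The left-hand sides are integers, since each $\cI_b$ of a genuine or virtual bundle is an index (Remark~\ref{rem:virtual-indices}). It therefore suffices to know that $8\delta_2$ and $\varepsilon_2$ have integral $t$-expansions. Indeed $8\delta_2=-(\theta_1(0)^4+\theta_3(0)^4)$ and $\varepsilon_2=\frac1{16}\theta_1(0)^4\theta_3(0)^4$. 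Since $\theta_1(0,\tau)=2q^{1/8}\prod(1-q^n)(1+q^n)^2$, its fourth power is $16q^{1/2}\times(\text{integral series})$. Hence $\theta_1(0)^4\in 16t\,\mathbb Z[[t]]$, which cancels the $1/16$ in $\varepsilon_2$, and $\theta_3(0)^4\in\mathbb Z[[t]]$.

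Solving the system recursively then gives integers. We have $h_0=(-1)^k\cI_b(1)$, and next $h_1=(-1)^k\cI_b(\cB^{(1)}_{a,b})-24kh_0$. Then $h_2$ is the $t^2$ coefficient minus integer combinations of $h_0,h_1$. Finally $h_3$, needed when $k=6,7$, is obtained in the same way from the integer $t^3$ coefficient of $R_2^{a,b}$, which is again an index pairing with an integral virtual bundle coefficient of $\Theta_2$. The one point to verify carefully is the integrality of the $\theta_1^4$ expansion, i.e.\ that the factor $2^4$ from the prefactor $2q^{1/8}$ exactly matches the $1/16$.
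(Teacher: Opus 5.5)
Your proposal is correct and follows the paper's proof step for step. You match the $t^0,t^1,t^2$ coefficients of the basis expansion against the index expansion, and you get integrality of $h_0,\dots,h_3$ by solving the triangular system (diagonal entries $(-1)^k$) against the integer $t^j$ coefficients of $R_2^{a,b}$. The one difference is how you justify that the expansions are integral: the paper simply displays $8\delta_2$ and $\varepsilon_2$ through order $t^3$, whereas your product-formula argument ($\theta_1(0,\tau)^4\in 16t\,\mathbb Z[[t]]$, $\theta_3(0,\tau)^4\in\mathbb Z[[t]]$) gives integrality to all orders, which would also cover the extra step for $h_4$ in dimension $32$.
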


\begin{proof}
Set \(t=q^{1/2}\), \(m=\lfloor k/2\rfloor\), and write
\[
F_r(t):=(8\delta_2)^{k-2r}\varepsilon_2^r,
\qquad
R_2^{a,b}(\tau)=\sum_{r=0}^{m}h_rF_r(t).
\]
Using
\[
8\delta_2=-1-24t-24t^2+O(t^3),
\qquad
\varepsilon_2=t+8t^2+O(t^3),
\]
we obtain
\begin{align*}
F_0(t)
&=(-1)^k
  \left[1+24kt+24k(12k-11)t^2\right]+O(t^3),\\
F_1(t)
&=(-1)^k
  \left[t+(24k-40)t^2\right]+O(t^3),
  && k\geq2,\\
F_2(t)
&=(-1)^kt^2+O(t^3),
  && k\geq4.
\end{align*}
Moreover, \(F_r(t)=O(t^3)\) for \(3\leq r\leq m\).
Thus, with \(h_r=0\) for \(r>m\),
\begin{align*}
R_2^{a,b}(\tau)=(-1)^k\Bigl\{
&h_0+(24kh_0+h_1)t\\
&+\bigl[
24k(12k-11)h_0+(24k-40)h_1+h_2
\bigr]t^2
\Bigr\}+O(t^3).
\end{align*}
Comparing this with
\[
R_2^{a,b}(\tau)
=\mathcal I_b(1)
+\mathcal I_b(\mathcal B^{(1)}_{a,b})t
+\mathcal I_b(\mathcal B^{(2)}_{a,b})t^2
+O(t^3)
\]
gives the three asserted identities.

For integrality, put
\[
N_j:=[t^j]R_2^{a,b}(\tau)\in\mathbb Z,
\qquad
u_{r,j}:=[t^j]F_r(t),
\]
where \(N_j\) is an integer by its index interpretation.
The expansions
\begin{align*}
8\delta_2
&=-1-24t-24t^2-96t^3+O(t^4),\\
\varepsilon_2
&=t+8t^2+28t^3+O(t^4),
\end{align*}
together with \(m\leq3\), imply
\[
u_{r,j}\in\mathbb Z,\qquad
u_{r,j}=0\quad(j<r),\qquad
u_{r,r}=(-1)^k,
\qquad 0\leq r,j\leq m.
\]
Comparing the coefficients of \(t^j\), we have
\[
N_j=(-1)^kh_j+\sum_{r=0}^{j-1}h_ru_{r,j}.
\]
Hence
\[
h_0=(-1)^kN_0\in\mathbb Z,
\qquad
h_j=(-1)^k
\left(N_j-\sum_{r=0}^{j-1}h_ru_{r,j}\right),
\quad 1\leq j\leq m.
\]
Induction on \(j\) therefore gives
\(h_j\in\mathbb Z\) for every \(0\leq j\leq m\).
\end{proof}
\subsection{The first coefficient on the
\texorpdfstring{$\Gamma_0(2)$}{Gamma-0(2)} side}

From \eqref{eq:delta1-exp},
\begin{equation}\label{eq:8delta1}
          8\delta_1=2+48q+O(q^2),
 \qquad \varepsilon_1=\frac1{16}-q+O(q^2).
\end{equation}
If $n=k-2r$, then
\begin{align}
 (8\delta_1)^n\varepsilon_1^r
 &=2^n(1+24nq+O(q^2))
   16^{-r}(1-16rq+O(q^2))\notag\\
 &=2^{k-6r}\bigl[1+(24k-64r)q+O(q^2)\bigr].
 \label{eq:delta1-eps-power}
\end{align}
Comparing the coefficient of $q$ in \eqref{eq:R1-coeff} and
\eqref{eq:R1-basis} proves the next lemma.

\begin{lemma}\label{lem:R1-q}
For every $k\geq1$,
\begin{equation}\label{eq:R1-q-formula}
 \cI_a(\cA_{a,b})
 =2^{(a-b)2k}
  \sum_{r=0}^{\lfloor k/2\rfloor}
   2^{k-6r}(24k-64r)h_r.
\end{equation}
\end{lemma}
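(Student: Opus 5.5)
The statement is Lemma~\ref{lem:R1-q}. I would prove it by comparing $q$-coefficients in \eqref{eq:R1-coeff} and \eqref{eq:R1-basis}.

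The first step is to take \eqref{eq:R1-basis}, which is obtained from the expansion \eqref{eq:R2-basis} of $R_2^{a,b}$. One applies $\tau\mapsto-1/\tau$ and uses \eqref{eq:LW-transform}, which reads equivalently $R_1^{a,b}(\tau)=2^{(a-b)2k}\tau^{-2k}R_2^{a,b}(-1/\tau)$ after replacing $\tau$ by $-1/\tau$. Together with \eqref{eq:delta-transform}, in the form $\delta_2(-1/\tau)=\tau^2\delta_1(\tau)$ and $\varepsilon_2(-1/\tau)=\tau^4\varepsilon_1(\tau)$, each basis term $(8\delta_2)^{k-2r}\varepsilon_2^r$ at $-1/\tau$ becomes $\tau^{2k}(8\delta_1)^{k-2r}\varepsilon_1^r$. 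The powers $\tau^{2k}$ then cancel.

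I would check one point of bookkeeping here. As written, \eqref{eq:LW-transform} gives $R_1(-1/\tau)=2^{(a-b)2k}\tau^{2k}R_2(\tau)$. Substituting $\tau\to-1/\tau$ produces $(-1/\tau)^{2k}=\tau^{-2k}$, and $R_2(-1/\tau)=\sum h_r\tau^{2k}(8\delta_1)^{k-2r}\varepsilon_1^r$. The factor $\tau^{2k}$ therefore cancels exactly, and no sign appears because the weight $2k$ is even. The scalar $2^{(a-b)2k}$ is kept as stated; the computation only uses \eqref{eq:R1-basis}, so this is taken as given.

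The second step is to expand each term to first order in $q$ using \eqref{eq:8delta1}. We have $(8\delta_1)^n=2^n(1+24q+O(q^2))^n=2^n(1+24nq+O(q^2))$ and $\varepsilon_1^r=16^{-r}(1-16q+O(q^2))^r=2^{-4r}(1-16rq+O(q^2))$. With $n=k-2r$ the product has prefactor $2^{k-2r-4r}=2^{k-6r}$ and linear coefficient $24(k-2r)-16r=24k-64r$. This gives \eqref{eq:delta1-eps-power}.

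The third step is to sum over $r$ with coefficients $2^{(a-b)2k}h_r$ and read off the coefficient of $q$. Matching it with the $q$-coefficient $\cI_a(\cA_{a,b})$ in \eqref{eq:R1-coeff}, which comes from Proposition~\ref{prop:theta-coefficients}, gives \eqref{eq:R1-q-formula}.

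There is no real obstacle. The only care needed is in the linear coefficient, where $24n-16r$ with $n=k-2r$ must simplify to $24k-64r$, and in confirming that the $\tau$-powers cancel with no residual sign.
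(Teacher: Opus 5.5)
Your proposal is correct and follows the paper's argument. You obtain \eqref{eq:R1-basis} from \eqref{eq:R2-basis} via the $S$-transformation, expand $(8\delta_1)^{k-2r}\varepsilon_1^r=2^{k-6r}\bigl[1+(24k-64r)q+O(q^2)\bigr]$ as in \eqref{eq:delta1-eps-power}, and match the $q$-coefficient with \eqref{eq:R1-coeff}; your explicit check that the powers of $\tau$ cancel with no sign merely spells out the step the paper states in one line.
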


\subsection{Clearing denominators before taking integer residues}

The identity \eqref{eq:R1-q-formula} has rational coefficients and an
integer left-hand side. We clear its denominators before reducing
modulo $3$.

For fixed $a,b\in\Z_{\geq0}$, put
\begin{equation}\label{eq:integer-UV}
 U_k=2^{2k\max(a-b,0)},\qquad
 V_k=2^{2k\max(b-a,0)}.
\end{equation}
Both are positive integers, and $2^{2k(a-b)}=U_k/V_k$ as an exact
rational equality.  For $1\leq k\leq7$, choose
\begin{equation}\label{eq:integer-D}
 D_k=\begin{cases}
 1,& k=1,2,3,\\
 16,& k=4,5,\\
 256,& k=6,7.
 \end{cases}
\end{equation}
The factors $D_k$ clear the denominators inside the sum in
\eqref{eq:R1-q-formula}.  They have been chosen as powers of $4$ so
that no additional sign occurs modulo $3$.  Define the integers $J_k$
by the following table:
\begin{equation}\label{eq:integer-J-table}
\begin{array}{c|l}
k&J_k\\ \hline
1&48h_0\\
2&192h_0-h_1\\
3&576h_0+h_1\\
4&24576h_0+128h_1-2h_2\\
5&61440h_0+448h_1-h_2\\
6&2359296h_0+20480h_1+64h_2-3h_3\\
7&5505024h_0+53248h_1+320h_2-3h_3.
\end{array}
\end{equation}
Every $h_r$ in this table is an integer by
Lemma~\ref{lem:h-relations}.  Multiplying the exact coefficient sum
in \eqref{eq:R1-q-formula} by $D_k$ gives precisely $J_k$.
For example, at $k=4$ the coefficients
$1536,8,-1/8$ become $24576,128,-2$ after multiplication by $16$;
at $k=6$ the coefficients $9216,80,1/4,-3/256$ become
$2359296,20480,64,-3$ after multiplication by $256$.

Clearing also the outer denominator $V_k$ in
\eqref{eq:R1-q-formula} gives the integer identity
\begin{equation}\label{eq:integer-cleared}
 \boxed{D_kV_k\,\cI_a(\cA_{a,b})=U_kJ_k.}
\end{equation}
Since all exponents in \eqref{eq:integer-UV} are nonnegative and even,
\begin{equation}\label{eq:two-factor-mod3}
 U_k\equiv V_k\equiv D_k\equiv1\pmod3.
\end{equation}
Reducing \eqref{eq:integer-cleared} therefore gives
\begin{equation}\label{eq:integer-residue}
 \cI_a(\cA_{a,b})\equiv J_k\pmod3.
\end{equation}
This also covers $a<b$, since multiplication by $V_k$ clears the
outer denominator before reduction.

\subsection{Proof of the main comparison}

\begin{proof}[Proof of Theorem~\ref{thm:main-intro}]
We combine \eqref{eq:integer-residue} with
Lemma~\ref{lem:h-relations}. All congruences in this proof are modulo $3$.

If $k=1$, \eqref{eq:integer-cleared} reads
\[
 V_1\cI_a(\cA_{a,b})=48U_1h_0.
\]
Its right-hand side is divisible by $3$, and $V_1\equiv1$.
Consequently $\cI_a(\cA_{a,b})\equiv0$, proving
\eqref{eq:main-k1}.

If $k=2$, then $J_2=192h_0-h_1\equiv-h_1$, whereas
\eqref{eq:h1-rel} gives
\[
 \cI_b(\cB^{(1)}_{a,b})=48h_0+h_1\equiv h_1.
\]
Together with \eqref{eq:integer-residue}, this shows that the sum
of the two indices is congruent to zero modulo $3$.
For $k=3$, the corresponding integer expressions are
\[
 J_3=576h_0+h_1\equiv h_1,\qquad
 \cI_b(\cB^{(1)}_{a,b})=-72h_0-h_1\equiv-h_1.
\]
This proves \eqref{eq:main-k23} in both cases.

For $k=4$, the denominator-cleared expression is
\[
 J_4=24576h_0+128h_1-2h_2\equiv2h_1+h_2,
\]
since $24576$ is divisible by $3$, $128\equiv2$, and $-2\equiv1$.
On the other side, \eqref{eq:h2-rel} gives
\[
 \cI_b(\cB^{(2)}_{a,b})=3552h_0+56h_1+h_2
 \equiv2h_1+h_2.
\]
For $k=5$, the two integer expressions give
\begin{align*}
 J_5&=61440h_0+448h_1-h_2\equiv h_1+2h_2,\\
 \cI_b(\cB^{(2)}_{a,b})
 &=-5880h_0-80h_1-h_2\equiv h_1+2h_2.
\end{align*}
The minus sign for odd $k$ is included in the second line.

For $k=6$, use
\begin{align*}
 J_6&=2359296h_0+20480h_1+64h_2-3h_3
       \equiv2h_1+h_2,\\
 \cI_b(\cB^{(2)}_{a,b})
 &=8784h_0+104h_1+h_2\equiv2h_1+h_2.
\end{align*}
In particular, $-3h_3$ disappears because $h_3\in\Z$.
For $k=7$, the same integer comparison is
\begin{align*}
 J_7&=5505024h_0+53248h_1+320h_2-3h_3
       \equiv h_1+2h_2,\\
 \cI_b(\cB^{(2)}_{a,b})
 &=-12264h_0-128h_1-h_2\equiv h_1+2h_2.
\end{align*}
For each $k=4,5,6,7$, equation \eqref{eq:integer-residue} identifies
the residue of the first index with that of $J_k$, so the difference
of the two indices is congruent to zero.  This proves
\eqref{eq:main-k456}.
\end{proof}

\begin{remark}\label{rem:no-mod9}
In dimension $24$, the term $-3h_3$ in $J_6$ vanishes modulo $3$,
but integrality alone does not make it vanish modulo $9$. A comparison
modulo $9$ must retain the factors $D_6,U_6,V_6$ and account for all
coefficients using additional divisibility or geometric relations.
This limits the present argument, but does not rule out stronger
geometric congruences.
\end{remark}

\section{The three parameter families in dimension twenty-four}
\label{sec:families}

We express all index formulas in this section and
Section~\ref{sec:dimensions} using genuine complex vector bundles.
For $d=\dim M$ and $T=T_{\mathbb C}M$, put
\begin{equation}\label{eq:g56-indices}
\begin{aligned}
j_r&=\cI_r(\mathbb C),& t_r&=\cI_r(T),&
\ell_r&=\cI_r(\Lambda^2T),\\
s_r&=\cI_r(S^2T),& z_r&=\cI_r(T\otimes T)=s_r+\ell_r.
\end{aligned}
\end{equation}
Thus each symbol is the index of $D$ twisted by a genuine bundle,
including the factor $\cS(M)^{\otimes r}$ in $\cI_r$.
These are the indices denoted by $J_r,U_r,V_r,W_r,Z_r$ in
Corollary~\ref{cor:genuine-uniform}; the lower-case notation here
distinguishes them from the clearing factors $U_k,V_k$ and the scalar
expression $J_k$ in Section~\ref{sec:comparison}.
For $r\geq1$,
\[
\cI_r(F)=\Sig(M,\cS(M)^{\otimes(r-1)}\otimes F),
\qquad F=\mathbb C,T,\Lambda^2T,S^2T,T\otimes T.
\]

To keep the exact coefficient comparisons concise, define the following
integer combinations of genuine-bundle indices:
\begin{equation}\label{eq:g56-combinations}
\begin{aligned}
P^{(d)}_{a,b}&=(a+1)t_a+b\ell_a-bs_a+d(b-a-1)j_a,\\
Q^{(d)}_{a,b}&=(b-a)(t_b-dj_b),\\
H^{(d)}_{a,b}&=(a+b)\ell_b+c_{a,b}z_b
                 +\alpha_{a,b}^{(d)}t_b+\beta_{a,b}^{(d)}j_b,
\end{aligned}
\end{equation}
where
\begin{equation}\label{eq:g56-coefficients}
\begin{aligned}
c_{a,b}&=\binom a2+\binom b2-ab,\\
\alpha_{a,b}^{(d)}&=1+b-d(a+b)-2dc_{a,b},\\
\beta_{a,b}^{(d)}&=-d(1+b)+(a+b)\binom{d+1}{2}+d^2c_{a,b}.
\end{aligned}
\end{equation}
The exact substitutions in \eqref{eq:lambda-reduced},
\eqref{eq:s-reduced}, and \eqref{eq:tensor-reduced} give
\[
[q]R_1^{a,b}=P^{(d)}_{a,b},\qquad
[q^{1/2}]R_2^{a,b}=Q^{(d)}_{a,b},\qquad
[q]R_2^{a,b}=H^{(d)}_{a,b}.
\]
The symbols $P,Q,H$ denote integer combinations of genuine-bundle indices.

Throughout the rest of this section $d=24$. Since
$24,\binom{25}{2},24^2$ are divisible by $3$,
Theorem~\ref{thm:main-intro} becomes
\begin{equation}\label{eq:24-master}
(a+1)t_a+b(\ell_a-s_a)
\equiv (b+1)t_b+(a+b)\ell_b+c_{a,b}z_b\pmod3.
\end{equation}
We treat $(0,0)$, $(a,0)$, and $(0,b)$ separately.

For the parameter choices considered below, the corresponding
coefficient classes in dimension $24$ are listed in the following
table. These exact identities follow from
\eqref{eq:lambda-reduced}--\eqref{eq:tensor-reduced},
with $T=T_{\mathbb C}M$ and $E=T-\mathbb C^{24}$.

\begin{equation}\label{eq:24-parameter-classes}
\begin{array}{c|c|c|c}
(a,b)&\mathcal A_{a,b}&\mathcal B^{(1)}_{a,b}
     &\mathcal B^{(2)}_{a,b}\\ \hline
(0,0)&T-24&0&T-24\\
(1,0)&2T-48&24-T&\Lambda^2T-23T+276\\
(0,1)&T+\Lambda^2T-S^2T&T-24&
                   \Lambda^2T-22T+252\\
(2,0)&3T-72&48-2T&
                   T^{\otimes2}+2\Lambda^2T-95T+1152\\
(0,2)&T+2\Lambda^2T-2S^2T+24&2T-48&
                   T^{\otimes2}+2\Lambda^2T-93T+1104
\end{array}
\end{equation}

Here each integer denotes the corresponding multiple of the trivial
complex line bundle, and all equalities are in $K(M)$.

\subsection{The weight-twelve coefficient calculation}
Since $k=6$, the modular expansion is
\begin{equation}\label{eq:dim24-R2}
R_2^{a,b}=h_0(8\delta_2)^6+h_1(8\delta_2)^4\varepsilon_2
+h_2(8\delta_2)^2\varepsilon_2^2+h_3\varepsilon_2^3.
\end{equation}
The last term starts at $q^{3/2}$. Lemma~\ref{lem:h-relations} gives
the exact integer relations
\begin{align}
j_b&=h_0,\label{eq:dim24-h0}\\
Q^{(24)}_{a,b}&=144h_0+h_1,\label{eq:dim24-h1}\\
H^{(24)}_{a,b}&=8784h_0+104h_1+h_2.\label{eq:dim24-h2}
\end{align}
The transformed expression is
\begin{equation}\label{eq:dim24-R1}
\begin{split}
R_1^{a,b}=2^{12(a-b)}\{&h_0(8\delta_1)^6+h_1(8\delta_1)^4\varepsilon_1\\
&+h_2(8\delta_1)^2\varepsilon_1^2+h_3\varepsilon_1^3\}.
\end{split}
\end{equation}
Its $q$ coefficient is
\begin{equation}\label{eq:dim24-A-exact}
P^{(24)}_{a,b}=2^{12(a-b)}
\left(9216h_0+80h_1+\frac14h_2-\frac3{256}h_3\right).
\end{equation}
Multiplying by $256V_6$ gives an equality of integers:
\begin{equation}\label{eq:dim24-A-integer}
256V_6P^{(24)}_{a,b}
=U_6(2359296h_0+20480h_1+64h_2-3h_3).
\end{equation}
Since $256,U_6,V_6\equiv1\pmod3$,
\[
P^{(24)}_{a,b}\equiv2h_1+h_2
\equiv H^{(24)}_{a,b}\pmod3.
\]
Expanding $P^{(24)}_{a,b},H^{(24)}_{a,b}$ by
\eqref{eq:g56-combinations} proves \eqref{eq:24-master}.

\subsection{The case \texorpdfstring{$(a,b)=(0,0)$}{(a,b)=(0,0)}}
For $(a,b)=(0,0)$, the exact coefficient combinations are
\[
P^{(24)}_{0,0}=H^{(24)}_{0,0}=t_0-24j_0,
\qquad Q^{(24)}_{0,0}=0.
\]
Thus \eqref{eq:24-master} is the tautology $t_0\equiv t_0$.
A nontrivial identity comes from the next coefficient of the Witten genus.
The following is the identity of Chen and Han \cite[Lemma 2.1]{ChenHan}.

\begin{proposition}[Chen--Han]\label{prop:CH-Witten}
For a closed $24$-dimensional string manifold,
\begin{equation}\label{eq:CH-exact}
s_0+t_0=196884j_0.
\end{equation}
Consequently,
\begin{equation}\label{eq:CH-mod3}
\Ind(D\otimes S^2T)+\Ind(D\otimes T)\equiv0\pmod3.
\end{equation}
\end{proposition}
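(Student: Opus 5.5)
We plan to identify $R_1^{0,0}$ with the Witten genus and show it is a level-one modular form of weight $12$. It then lies in the two-dimensional space spanned by $E_4^3$ and $\Delta$, and comparing its first three Fourier coefficients gives \eqref{eq:CH-exact}.

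\emph{Step 1 (level one).} For $a=b=0$ the definitions \eqref{eq:Theta1-general}--\eqref{eq:Theta2-general} give $\Theta_1(\T,\T,0,0)=\Theta_2(\T,\T,0,0)=\bigotimes_{n\ge1}S_{q^n}(E)$. Hence $R_1^{0,0}=R_2^{0,0}=:W(\tau)$.

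Theorem~\ref{thm:liu-wang-special} with $a=b=0$ then gives $W(-1/\tau)=\tau^{12}W(\tau)$. The theorem also gives invariance under $\tau\mapsto\tau+1$, since it is a $\Gamma_0(2)$-form, and holomorphy at the cusps. So $W$ is a holomorphic modular form of weight $12$ for $SL_2(\mathbb Z)$, and we may write
\[
 W=\alpha E_4^3+\beta\Delta,\qquad
 E_4^3=1+720q+179280q^2+O(q^3),\quad \Delta=q-24q^2+O(q^3).
\]

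\emph{Step 2 (Fourier coefficients).} We expand $\bigotimes_n S_{q^n}(E)$ through $q^2$ as $1+Eq+(S^2E+E)q^2+O(q^3)$. We then convert to genuine bundles with \eqref{eq:s-reduced} and $d=24$, using $\binom{24}{2}=276$. Pairing with $\Ahat(TM)$ yields
\[
 [q^0]W=j_0,\qquad [q^1]W=t_0-24j_0,\qquad [q^2]W=s_0-23t_0+252j_0 .
\]

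\emph{Step 3 (elimination).} The constant term gives $\alpha=j_0$. The $q$-coefficient gives $\beta=t_0-744j_0$. Substituting into the $q^2$-coefficient gives
\[
 s_0-23t_0+252j_0=179280j_0-24(t_0-744j_0),
\]
that is, $s_0+t_0=(179280+17856-252)j_0=196884j_0$. Since $196884=3\cdot65628$, \eqref{eq:CH-mod3} follows.

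The only delicate point is Step 1, namely checking that no stray factor of $2$ or exponential anomaly term survives. With $a=b=0$ the scalar $2^{(a-b)2k}$ is $1$. The anomaly exponent $\pi\sqrt{-1}\,\tau(a+2b-1)\sum_jv_j^2$ reduces to a multiple of $p_1$, which vanishes cohomologically as in the proof of Theorem~\ref{thm:liu-wang-special}. The remaining steps are routine coefficient arithmetic.
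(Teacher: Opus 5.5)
Your proposal is correct and follows the paper's proof essentially line for line. It uses the same three coefficients $j_0$, $t_0-24j_0$, $s_0-23t_0+252j_0$, the same expansion in $E_4^3$ and $\Delta$, and the same elimination giving $196884j_0$. The only difference is cosmetic: you get level-one modularity by specializing Theorem~\ref{thm:liu-wang-special} to $a=b=0$, where $\bigotimes_n S_{q^n}(E)$ is the Witten bundle since $S_{q^n}(\mathbb C^{24})^{-1}=(1-q^n)^{24}$, whereas the paper cites Witten and Liu for this.
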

\begin{proof}
Write the Witten bundle using the genuine bundle $T$:
\begin{equation}\label{eq:witten-bundle}
\Theta_W(T)=\left(\bigotimes_{n\geq1}S_{q^n}(T)\right)
                 \prod_{n\geq1}(1-q^n)^{24}.
\end{equation}
Its low-order coefficients follow from
\begin{align*}
\bigotimes_{n\geq1}S_{q^n}(T)
 &=1+Tq+(S^2T+T)q^2+O(q^3),\\
\prod_{n\geq1}(1-q^n)^{24}
 &=1-24q+252q^2+O(q^3).
\end{align*}
Hence, with $\mathcal W_M=\langle\Ahat(TM)\ch(\Theta_W(T)),[M]\rangle$,
\begin{equation}\label{eq:Witten-exp-24}
\mathcal W_M=j_0+(t_0-24j_0)q+(s_0-23t_0+252j_0)q^2+O(q^3).
\end{equation}
The string condition makes this a weight-twelve modular form for
$SL_2(\mathbb Z)$ \cite{WittenElliptic,LiuCMP}. Thus
\begin{equation}\label{eq:weight12-basis}
\mathcal W_M=mE_4^3+n\Delta_{\mathrm{mod}}.
\end{equation}
Using
\begin{align}
E_4^3&=1+720q+179280q^2+O(q^3),\label{eq:E4cube}\\
\Delta_{\mathrm{mod}}&=q-24q^2+O(q^3),\label{eq:Delta-mod}
\end{align}
comparison gives
\begin{align}
j_0&=m,\label{eq:CH-elim0}\\
t_0-24j_0&=720m+n,\label{eq:CH-elim1}\\
s_0-23t_0+252j_0&=179280m-24n.\label{eq:CH-elim2}
\end{align}
In particular $m,n\in\mathbb Z$ and $n=t_0-744m$. Substitution yields
\[
s_0-23t_0+252m=197136m-24t_0,
\qquad s_0+t_0=196884m.
\]
Since $3\mid196884$, \eqref{eq:CH-mod3} follows.
\end{proof}

\subsection{The family \texorpdfstring{$(a,0)$}{(a,0)}}
Set $b=0$ and $C_a=\binom a2$. The exact genuine-index combinations are
\begin{align}
P^{(24)}_{a,0}&=(a+1)(t_a-24j_a),\label{eq:a0-A}\\
Q^{(24)}_{a,0}&=-a(t_0-24j_0),\label{eq:a0-B1}\\
H^{(24)}_{a,0}&=a\ell_0+C_az_0+(1-24a-48C_a)t_0
                    +(-24+300a+576C_a)j_0.\label{eq:a0-B2}
\end{align}
\begin{corollary}\label{cor:a0-24}
For every $a\geq0$ and every closed $24$-dimensional string manifold,
\begin{equation}\label{eq:a0-24}
(a+1)t_a\equiv t_0+a\ell_0+\binom a2z_0\pmod3.
\end{equation}
\end{corollary}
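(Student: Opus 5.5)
This is the specialization $b=0$ of the dimension-twenty-four master congruence \eqref{eq:24-master}, so the plan is direct substitution. With $b=0$ we have $c_{a,0}=\binom a2+0-0=\binom a2=C_a$. We insert this into \eqref{eq:24-master}, which was derived from Theorem~\ref{thm:main-intro}, or equivalently from the comparison $P^{(24)}_{a,b}\equiv H^{(24)}_{a,b}\pmod 3$ of the weight-twelve calculation.

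On the left-hand side of \eqref{eq:24-master}, the term $b(\ell_a-s_a)$ vanishes, which leaves $(a+1)t_a$. On the right-hand side, $(b+1)t_b+(a+b)\ell_b+c_{a,b}z_b$ becomes
\[
t_0+a\ell_0+\binom a2 z_0 .
\]
This is exactly \eqref{eq:a0-24}.

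As a cross-check, and in case one prefers not to rely on the already-reduced form \eqref{eq:24-master}, we can work from the exact expressions \eqref{eq:a0-A} and \eqref{eq:a0-B2}. Modulo $3$, the term $(a+1)(t_a-24j_a)$ reduces to $(a+1)t_a$, since $3\mid24$. In $H^{(24)}_{a,0}$, the coefficients $24a$, $48C_a$, $-24$, $300a$ and $576C_a$ are all divisible by $3$. Hence $H^{(24)}_{a,0}\equiv a\ell_0+C_az_0+t_0$. The congruence $P^{(24)}_{a,0}\equiv H^{(24)}_{a,0}\pmod 3$ then gives the claim.

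There is no real obstacle. The only point requiring care is that the underlying comparison is valid for all $a\ge0$ with $b=0$, including the case $a>b$ where the factor $2^{12a}$ appears. This is handled by the integer identity \eqref{eq:dim24-A-integer} together with $U_6\equiv V_6\equiv 256\equiv1\pmod3$. For $a=0$ the statement reduces to the tautology noted for the case $(0,0)$, which is consistent.
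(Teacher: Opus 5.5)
Your proposal is correct and is the paper's own argument: set $b=0$ in \eqref{eq:24-master}, so that $c_{a,0}=\binom a2$, and note that every rank-correction coefficient in \eqref{eq:a0-A}--\eqref{eq:a0-B2} ($24$, $48C_a$, $-24$, $300a$, $576C_a$) is divisible by $3$. Your remark on the case $a>b$ is also right: there $U_6=2^{12a}$ and $V_6=1$ in \eqref{eq:dim24-A-integer}, and both are congruent to $1$ modulo $3$.
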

\begin{proof}
Substitute $b=0$ in \eqref{eq:24-master}. Every omitted rank-correction
coefficient in \eqref{eq:a0-A}--\eqref{eq:a0-B2} is divisible by $3$.
\end{proof}
The coefficient $a+1$ vanishes modulo $3$ precisely when
$a\equiv2\pmod3$; $t_a$ itself may have additional divisibility
for other parameters.

\begin{corollary}\label{cor:a2-24}
For every closed $24$-dimensional string manifold,
\begin{align}
t_0+2\ell_0+z_0&\equiv0\pmod3,\label{eq:a2-first}\\
s_0+t_0&\equiv0\pmod3.\label{eq:a2-second}
\end{align}
\end{corollary}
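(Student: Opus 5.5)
Both congruences follow from Corollary~\ref{cor:a0-24} with $a=2$, combined with the Chen--Han identity of Proposition~\ref{prop:CH-Witten} and the tensor decomposition \eqref{eq:tensor-decomp}.

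\emph{First congruence.} Put $a=2$ in \eqref{eq:a0-24}. Since $\binom22=1$, the right-hand side is $t_0+2\ell_0+z_0$. The coefficient of $t_2$ on the left is $a+1=3$, so the left-hand side vanishes modulo $3$, which gives \eqref{eq:a2-first}. This is legitimate because $t_2=\cI_2(T)$ is an integer, so $3t_2\equiv0$. As a cross-check, I would verify directly from table \eqref{eq:24-parameter-classes} that for $(a,b)=(2,0)$ one has $\cA_{2,0}=3T-72$, so $\cI_2(\cA_{2,0})\equiv0$. Also $\cB^{(2)}_{2,0}=T^{\otimes2}+2\Lambda^2T-95T+1152$, and since $95\equiv-1$ and $3\mid1152$, \eqref{eq:main-k456} gives $z_0+2\ell_0+t_0\equiv0\pmod3$, consistent with the above.

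\emph{Second congruence.} By \eqref{eq:tensor-decomp}, $z_0=s_0+\ell_0$, so \eqref{eq:a2-first} becomes
\[
t_0+3\ell_0+s_0\equiv0\pmod3,
\]
that is, $s_0+t_0\equiv0\pmod3$, which is \eqref{eq:a2-second}. This is exactly the Chen--Han conclusion \eqref{eq:CH-mod3}, now recovered without the Witten-genus second coefficient; Proposition~\ref{prop:CH-Witten} can serve as an independent check.

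The only point that needs care is that every term discarded modulo $3$ is an integer times a multiple of $3$. Here those terms are $3t_2$, $3\ell_0$, and the rank-correction constants from \eqref{eq:a0-A}--\eqref{eq:a0-B2}, whose divisibility Corollary~\ref{cor:a0-24} already guarantees. I expect no real obstacle.
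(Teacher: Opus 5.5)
Your proof is correct and follows the paper's argument: setting $a=2$ in Corollary~\ref{cor:a0-24} makes the left side $3t_2\equiv0$, and $z_0=s_0+\ell_0$ turns $t_0+2\ell_0+z_0$ into $t_0+s_0+3\ell_0$, which gives both congruences. The Chen--Han identity of Proposition~\ref{prop:CH-Witten} is not actually needed (you use it only as a cross-check), so your opening sentence should not list it as an ingredient.
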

\begin{proof}
Setting $a=2$ in \eqref{eq:a0-24} gives
\[
0\equiv3t_2\equiv t_0+2\ell_0+z_0
=t_0+s_0+3\ell_0\pmod3.
\]
Both conclusions follow because $z_0=s_0+\ell_0$ and $\ell_0\in\mathbb Z$.
\end{proof}
The level-two comparison therefore recovers the mod $3$ consequence
of Proposition~\ref{prop:CH-Witten}. More generally,
$a\equiv2\pmod3$ implies $\binom a2\equiv1\pmod3$ and produces the same
right-hand residue $t_0+s_0$.

\subsection{The family \texorpdfstring{$(0,b)$}{(0,b)}}
Set $a=0$ and $C_b=\binom b2$. Then
\begin{align}
P^{(24)}_{0,b}&=t_0+b(\ell_0-s_0)+24(b-1)j_0,\label{eq:0b-A}\\
Q^{(24)}_{0,b}&=b(t_b-24j_b),\label{eq:0b-B1}\\
H^{(24)}_{0,b}&=b\ell_b+C_bz_b+(1-23b-48C_b)t_b
                       +(-24+276b+576C_b)j_b.\label{eq:0b-B2}
\end{align}
\begin{corollary}\label{cor:0b-24}
For every $b\geq0$ and every closed $24$-dimensional string manifold,
\begin{equation}\label{eq:0b-24}
t_0+b(\ell_0-s_0)
\equiv(b+1)t_b+b\ell_b+\binom b2z_b\pmod3.
\end{equation}
\end{corollary}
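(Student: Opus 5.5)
The plan is to specialize the master congruence \eqref{eq:24-master} to $a=0$, exactly as Corollary~\ref{cor:a0-24} was obtained from the case $b=0$. With $a=0$ one has $c_{0,b}=\binom b2=C_b$, so the right-hand side of \eqref{eq:24-master} becomes $(b+1)t_b+b\ell_b+\binom b2 z_b$. The left-hand side becomes $t_0+b(\ell_0-s_0)$, since $\cI_a=\cI_0$ when $a=0$. This is literally \eqref{eq:0b-24}, so the work consists only in confirming that the reduction from Theorem~\ref{thm:main-intro} to \eqref{eq:24-master} is legitimate for these parameters.

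For that confirmation I will use the exact genuine-index expressions \eqref{eq:0b-A} and \eqref{eq:0b-B2} for $P^{(24)}_{0,b}$ and $H^{(24)}_{0,b}$. In dimension $24$, the case $k=6$ of Theorem~\ref{thm:main-intro} reads $P^{(24)}_{0,b}\equiv H^{(24)}_{0,b}\pmod 3$. This is the congruence obtained from the integer identity \eqref{eq:dim24-A-integer} together with \eqref{eq:dim24-h2}, and the factors $256,U_6,V_6$ are $\equiv1\pmod3$. The case $a<b$ is covered by the clearing factor $V_6$, so nothing new is needed there.

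Next I reduce the rank-correction coefficients modulo $3$. In \eqref{eq:0b-A} the coefficient $24(b-1)$ of $j_0$ vanishes modulo $3$. In \eqref{eq:0b-B2} the coefficient of $t_b$ is $1-23b-48C_b\equiv1+b$, and the coefficient of $j_b$ is $-24+276b+576C_b\equiv0$, because $24$, $276$ and $576$ are all divisible by $3$. Substituting these residues gives \eqref{eq:0b-24} directly.

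The only point requiring care is the sign bookkeeping: $-23\equiv1$ must be used so that $t_b$ appears with coefficient $b+1$. I expect no real obstacle, since the statement is a direct specialization of results already proved in the paper.
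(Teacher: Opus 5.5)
Your proposal is correct and follows the paper's route: the corollary is \eqref{eq:24-master} at $a=0$ with $c_{0,b}=\binom b2$. Your reductions $24(b-1)\equiv0$, $1-23b-48C_b\equiv b+1$ and $-24+276b+576C_b\equiv0$ are accurate. The one point to state explicitly, as the paper does, is that dropping these multiples of $3$ uses the integrality of the genuine-bundle indices $j_0,t_b,j_b$ (indeed of all five index types in \eqref{eq:g56-indices}).
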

\begin{proof}
This is \eqref{eq:24-master} with $a=0$; all five types of indices
in \eqref{eq:g56-indices} are integers.
\end{proof}
If $b=3m+2$, then
\[
\binom b2-1=\frac{9m(m+1)}2\in3\mathbb Z,
\qquad b+1\equiv0,\quad b\equiv-1\pmod3.
\]
It follows that
\begin{equation}\label{eq:B2-b2-class}
H^{(24)}_{0,b}\equiv-\ell_b+z_b=s_b\pmod3.
\end{equation}
The least such parameter is $b=2$. For comparison, $b=1$ gives an
ordinary twisted signature, since $\cI_1(F)=\Sig(M,F)$; $b=2$ gives
a signature with one additional spinor factor.

\begin{theorem}\label{thm:b2-24}
For every closed $24$-dimensional string manifold,
\begin{equation}\label{eq:b2-24-final}
s_2+\ell_0\equiv0\pmod3.
\end{equation}
Equivalently, with genuine twisting bundles,
\begin{equation}\label{eq:b2-24-signature}
\Sig(M,\cS(M)\otimes S^2T)
+\Ind(D\otimes\Lambda^2T)\equiv0\pmod3.
\end{equation}
\end{theorem}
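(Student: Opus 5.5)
Specialize Corollary~\ref{cor:0b-24} to $b=2$ and combine it with the mod $3$ consequence of the Witten-genus identity. With $a=0$, $b=2$ we have $\binom22=1$, $b+1=3\equiv0$, and $b\equiv-1\pmod3$. The right-hand side of \eqref{eq:0b-24} is therefore
\[
3t_2+2\ell_2+z_2\equiv-\ell_2+z_2=s_2\pmod3,
\]
using $z_2=s_2+\ell_2$ from \eqref{eq:g56-indices}; this is just \eqref{eq:B2-b2-class}. The left-hand side is $t_0+2(\ell_0-s_0)\equiv t_0-\ell_0+s_0\pmod3$. Hence
\[
t_0+s_0-\ell_0\equiv s_2\pmod3.
\]

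The second step removes $t_0+s_0$. By Proposition~\ref{prop:CH-Witten}, or equally by the purely level-two route \eqref{eq:a2-second} of Corollary~\ref{cor:a2-24}, we have $s_0+t_0\equiv0\pmod3$. Substituting this gives $-\ell_0\equiv s_2$, that is, $s_2+\ell_0\equiv0\pmod3$, which is \eqref{eq:b2-24-final}. For the signature form \eqref{eq:b2-24-signature}, use \eqref{eq:I-signature} with $r=2$: $s_2=\cI_2(S^2T)=\Sig(M,\cS(M)\otimes S^2T)$, while $\ell_0=\Ind(D\otimes\Lambda^2T)$. All twisting bundles here are genuine.

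The main thing to verify is the bookkeeping of the rank-correction terms, since Corollary~\ref{cor:0b-24} has already discarded them. I would recheck, directly from \eqref{eq:0b-A} and \eqref{eq:0b-B2} with $b=2$, that the omitted coefficients are all multiples of $3$: $24(b-1)=24$ on the $P$ side, and on the $H$ side $1-23b-48C_b=-93$ and $-24+276b+576C_b=1104$. These match the $(0,2)$ row of \eqref{eq:24-parameter-classes}.

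Everything else is a direct substitution into results already established. The only real input is Theorem~\ref{thm:main-intro} for $k=6$, together with the integrality of the indices $\ell_2$ and $z_2$, which is needed so that the terms $3t_2$ and the difference $z_2-\ell_2=s_2$ may be reduced modulo $3$ legitimately.
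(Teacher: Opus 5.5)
Your proposal is correct and follows the paper's proof essentially step for step. You specialize Corollary~\ref{cor:0b-24} at $b=2$, reduce the right side to $s_2$ via $z_2=s_2+\ell_2$, reduce the left side to $t_0-\ell_0+s_0$, and then eliminate $t_0+s_0$ using Corollary~\ref{cor:a2-24}, which is the paper's choice, or equivalently Proposition~\ref{prop:CH-Witten}; your recheck of the discarded rank-correction coefficients $24$, $-93$, $1104$ is accurate and agrees with the $(0,2)$ row of \eqref{eq:24-parameter-classes}.
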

\begin{proof}
At $b=2$, Corollary~\ref{cor:0b-24} and $z_2=s_2+\ell_2$ give
\[
t_0+2\ell_0-2s_0
\equiv3t_2+2\ell_2+z_2
=s_2+3(t_2+\ell_2)\equiv s_2\pmod3.
\]
Corollary~\ref{cor:a2-24} gives $t_0+s_0\equiv0$, so
\[
t_0+2\ell_0-2s_0
\equiv t_0-\ell_0+s_0\equiv-\ell_0\pmod3.
\]
Thus $s_2+\ell_0\equiv0$. The signature formulation follows from
\eqref{eq:I-signature}.
\end{proof}
For a fixed index subscript $r$, the coefficient reductions above
depend only on the parameters modulo $3$. This does not imply
periodicity of the full pairings $\mathcal I_a$ and $\mathcal I_b$,
since their twisting bundles contain the actual tensor powers
$\mathcal S(M)^{\otimes a}$ and $\mathcal S(M)^{\otimes b}$.
\subsection{Recovery of the Chen--Han comparison}
The choices $(a,b)=(0,1)$ and $(1,0)$ in \eqref{eq:24-master} give
\begin{align}
t_0+\ell_0-s_0&\equiv2t_1+\ell_1
                         \equiv\ell_1-t_1\pmod3,\label{eq:recover-01}\\
t_1&\equiv-t_0-\ell_0\pmod3.\label{eq:recover-10}
\end{align}
Together with $s_0+t_0\equiv0$, these imply
\[
\ell_1\equiv t_1+t_0+\ell_0-s_0
\equiv-s_0\equiv t_0\pmod3,
\]
or
\[
\Sig(M,\Lambda^2T)\equiv\Ind(D\otimes T)\pmod3.
\]
This recovers \cite[Theorem 0.1]{ChenHan}. The additional divisibility
of the Rarita--Schwinger index by $3$ is a known geometric result;
it is not inferred solely from this two-index comparison.

For completeness, the same coefficients recover the exact identity of
\cite[Lemma 2.3]{ChenHan}. At $(a,b)=(1,0)$, put $m=j_0$; then
\[
h_0=m,\qquad h_1=-t_0-120m,\qquad
h_2=\ell_0+81t_0+3972m.
\]
The constant and first coefficients of $R_1^{1,0}$ give
\begin{align*}
j_1&=2^{12}\left(64h_0+h_1+\frac{h_2}{64}+\frac{h_3}{4096}\right),\\
2t_1-48j_1&=2^{12}
\left(9216h_0+80h_1+\frac{h_2}{4}-\frac{3h_3}{256}\right).
\end{align*}
Eliminating $h_3$ yields
\[
2t_1=2^{12}(12288h_0+128h_1+h_2).
\]
Substitution therefore gives the genuine-index identity
\begin{equation}\label{eq:recover-exact-10}
t_1=2^{11}(\ell_0-47t_0+900j_0).
\end{equation}
Indeed, $12288-15360+3972=900$ and $-128+81=-47$.
For this parameter pair, $1-a-2b=0$, so this last exact identity
also holds for closed $24$-dimensional spin manifolds without the
string hypothesis.

\section{The remaining dimensions}\label{sec:dimensions}
We now use the genuine-bundle indices \eqref{eq:g56-indices} and the
exact integer combinations \eqref{eq:g56-combinations} with
$d=\dim M$. The clearing factors $U_k,V_k$ retain their definitions
from Section~\ref{sec:comparison}. All congruences in the first seven
subsections are modulo $3$. Together with Section~\ref{sec:families},
these computations prove Theorem~\ref{thm:selected-intro}.

\subsection{Dimension four}
Here $d=4$ and $k=1$. The modular expansions are
\begin{align*}
R_2^{a,b}&=-h_0-24h_0q^{1/2}-24h_0q+O(q^{3/2}),\\
R_1^{a,b}&=2^{2(a-b)}h_0(2+48q+O(q^2)).
\end{align*}
Consequently,
\[
V_1P^{(4)}_{a,b}=48U_1h_0,
\]
and $V_1\equiv1\pmod3$ gives
\begin{equation}\label{eq:dim4-general}
(a+1)t_a+b(\ell_a-s_a)+4(b-a-1)j_a\equiv0\pmod3.
\end{equation}
In fact the degree-four component of each relevant index density is a
multiple of $p_1(TM)$. The string condition therefore gives
$j_r=t_r=\ell_r=s_r=z_r=0$ for every $r\geq0$.
In particular the three families satisfy
\begin{align}
(0,0):&\quad t_0=0,\label{eq:dim4-00}\\
(a,0):&\quad(a+1)(t_a-4j_a)=0,\label{eq:dim4-a0}\\
(0,b):&\quad t_0+b(\ell_0-s_0)+4(b-1)j_0=0.\label{eq:dim4-0b}
\end{align}
These equalities provide the dimension-four specializations without
using a virtual twisting bundle in an index.

\subsection{Dimension eight}
Here $d=8$ in \eqref{eq:g56-combinations}.

Now $k=2$, and
\begin{equation}\label{eq:dim8-R2}
 R_2^{a,b}=h_0(8\delta_2)^2+h_1\varepsilon_2.
\end{equation}
Using \eqref{eq:8delta2},
\begin{align*}
 (8\delta_2)^2&=1+48q^{1/2}+624q+O(q^{3/2}),\\
 \varepsilon_2&=q^{1/2}+8q+O(q^{3/2}).
\end{align*}
Therefore
\begin{align}
 j_b&=h_0,\label{eq:dim8-h0}\\
 Q^{(8)}_{a,b}&=48h_0+h_1,
 \label{eq:dim8-h1}\\
 H^{(8)}_{a,b}&=624h_0+8h_1.
 \label{eq:dim8-h2}
\end{align}
On the other side,
\[
 R_1^{a,b}=2^{4(a-b)}\left[h_0(8\delta_1)^2+h_1\varepsilon_1\right].
\]
After clearing the outer denominator, the $q$ coefficient identity is
\begin{equation}\label{eq:dim8-A}
 V_2P^{(8)}_{a,b}=U_2(192h_0-h_1).
\end{equation}
As $U_2\equiv V_2\equiv1\pmod3$, reduction of this integer equality gives
$P^{(8)}_{a,b}\equiv-h_1\pmod3$.
Meanwhile \eqref{eq:dim8-h1} is congruent to $h_1$.  This reproduces, with
all scalar coefficients visible,
\begin{equation}\label{eq:dim8-master}
 P^{(8)}_{a,b}+Q^{(8)}_{a,b}\equiv0\pmod3.
\end{equation}

The genuine-bundle parameter families are

\begin{equation}\label{eq:dim8-00}
t_0\equiv2j_0\pmod3.
\end{equation}

\begin{equation}\label{eq:dim8-a0}
(a+1)(t_a-8j_a)-a(t_0-8j_0)\equiv0\pmod3.
\end{equation}

\begin{equation}\label{eq:dim8-0b}
t_0+b(\ell_0-s_0)+8(b-1)j_0+b(t_b-8j_b)\equiv0\pmod3.
\end{equation}

At $a=2$, the first term of the $(a,0)$ relation is divisible by
3 and hence $t_0-dj_0\equiv0$. At $b=2$, the $(0,b)$ relation is
\[
t_0+2\ell_0-2s_0+dj_0+2(t_2-dj_2)\equiv0.
\]
Subtracting $t_0-dj_0\equiv0$ and multiplying by $2$ gives
\[
t_2-dj_2+\ell_0-s_0+dj_0\equiv0.
\]
Thus

\begin{equation}\label{eq:dim8-new}
t_2+\ell_0+2j_0\equiv2j_2+s_0\pmod3.
\end{equation}

Here $t_2=\Sig(M,\cS(M)\otimes T)$ and $j_2=\Sig(M,\cS(M))$.

\subsection{Dimension twelve}
Here $d=12$ in \eqref{eq:g56-combinations}.

For $k=3$ the modular basis is
\begin{equation}\label{eq:dim12-R2}
 R_2^{a,b}=h_0(8\delta_2)^3+h_1(8\delta_2)\varepsilon_2.
\end{equation}
Equations \eqref{eq:delta2-power} and \eqref{eq:r1-product} give
\begin{align}
 j_b&=-h_0,\label{eq:dim12-h0}\\
 Q^{(12)}_{a,b}&=-(72h_0+h_1),
 \label{eq:dim12-h1}\\
 H^{(12)}_{a,b}&=-(1800h_0+32h_1).
 \label{eq:dim12-h2}
\end{align}
Indeed, $24k(12k-11)=72\cdot25=1800$ and $24k-40=32$.

The transformed expression is
\[
 R_1^{a,b}=2^{6(a-b)}\left[
 h_0(8\delta_1)^3+h_1(8\delta_1)\varepsilon_1\right].
\]
Clearing the outer denominator in its $q$ coefficient gives
\begin{equation}\label{eq:dim12-A}
 V_3P^{(12)}_{a,b}=U_3(576h_0+h_1).
\end{equation}
All coefficients and factors are integers, so
$U_3\equiv V_3\equiv1\pmod3$ implies
$P^{(12)}_{a,b}\equiv h_1\pmod3$.
Since \eqref{eq:dim12-h1} is $-h_1$ modulo $3$, we obtain
\begin{equation}\label{eq:dim12-master}
 P^{(12)}_{a,b}+Q^{(12)}_{a,b}\equiv0\pmod3.
\end{equation}

The genuine-bundle parameter families are

\begin{equation}\label{eq:dim12-00}
t_0\equiv0\pmod3.
\end{equation}

\begin{equation}\label{eq:dim12-a0}
(a+1)(t_a-12j_a)-a(t_0-12j_0)\equiv0\pmod3.
\end{equation}

\begin{equation}\label{eq:dim12-0b}
t_0+b(\ell_0-s_0)+12(b-1)j_0+b(t_b-12j_b)\equiv0\pmod3.
\end{equation}

At $a=2$, the first term of the $(a,0)$ relation is divisible by
3 and hence $t_0-dj_0\equiv0$. At $b=2$, the $(0,b)$ relation is
\[
t_0+2\ell_0-2s_0+dj_0+2(t_2-dj_2)\equiv0.
\]
Subtracting $t_0-dj_0\equiv0$ and multiplying by $2$ gives
\[
t_2-dj_2+\ell_0-s_0+dj_0\equiv0.
\]
Thus

\begin{equation}\label{eq:dim12-new}
t_2+\ell_0\equiv s_0\pmod3.
\end{equation}

Here $t_2=\Sig(M,\cS(M)\otimes T)$ and $j_2=\Sig(M,\cS(M))$.

\subsection{Dimension sixteen}
Here $d=16$ in \eqref{eq:g56-combinations}.

When $k=4$, the basis acquires a third term:
\begin{equation}\label{eq:dim16-R2}
 R_2^{a,b}=h_0(8\delta_2)^4
 +h_1(8\delta_2)^2\varepsilon_2+h_2\varepsilon_2^2.
\end{equation}
The first three coefficients are
\begin{align}
 j_b&=h_0,\label{eq:dim16-h0}\\
 Q^{(16)}_{a,b}&=96h_0+h_1,
 \label{eq:dim16-h1}\\
 H^{(16)}_{a,b}&=3552h_0+56h_1+h_2.
 \label{eq:dim16-h2}
\end{align}
Here $24\cdot4(48-11)=3552$.  On the $R_1$ side, multiply the
exact coefficient identity by $16V_4$ to obtain
\begin{equation}\label{eq:dim16-A}
 16V_4P^{(16)}_{a,b}
 =U_4(24576h_0+128h_1-2h_2).
\end{equation}
Reducing only this integer equality, using
$16\equiv U_4\equiv V_4\equiv1\pmod3$, gives
\[
 P^{(16)}_{a,b}\equiv2h_1+h_2\pmod3.
\]
Equation \eqref{eq:dim16-h2} has the same reduction, and hence
\begin{equation}\label{eq:dim16-master}
 P^{(16)}_{a,b}-H^{(16)}_{a,b}\equiv0\pmod3.
\end{equation}

For the two parameter families, put $C_a=\binom a2$ and $C_b=\binom b2$. Then

\begin{equation}\label{eq:dim16-a0}
\begin{split}
(a+1)(t_a-16j_a)
&\equiv a\ell_0+C_az_0+(1-16a-32C_a)t_0\\
&\quad+(-16+136a+256C_a)j_0\pmod3.
\end{split}
\end{equation}

\begin{equation}\label{eq:dim16-0b}
\begin{split}
t_0+b(\ell_0-s_0)+16(b-1)j_0
&\equiv b\ell_b+C_bz_b+(1-15b-32C_b)t_b\\
&\quad+(-16+120b+256C_b)j_b\pmod3.
\end{split}
\end{equation}

Taking $(a,b)=(2,0)$ and using $z_0=s_0+\ell_0$ gives
\[
0\equiv H^{(d)}_{2,0}
\equiv s_0+(1-d)t_0+\frac{d(d-3)}2j_0\pmod3.
\]

\begin{equation}\label{eq:dim16-a2}
s_0\equiv j_0\pmod3.
\end{equation}

For $(a,b)=(0,2)$, reduction of the exact combinations gives
\begin{align*}
P^{(d)}_{0,2}&\equiv t_0-\ell_0+s_0+dj_0,\\
H^{(d)}_{0,2}&\equiv s_2-dt_2+\binom d2j_2.
\end{align*}
Using the preceding Dirac congruence and $P^{(d)}_{0,2}\equiv H^{(d)}_{0,2}$,
\[
s_2-dt_2+\binom d2j_2
+\ell_0-dt_0+\binom{d+1}{2}j_0\equiv0\pmod3.
\]
Substituting the dimension and reducing modulo $3$ gives

\begin{equation}\label{eq:dim16-new}
s_2+\ell_0+j_0\equiv t_2+t_0\pmod3.
\end{equation}

Here $s_2=\Sig(M,\cS(M)\otimes S^2T)$ and $t_2=\Sig(M,\cS(M)\otimes T)$.

\subsection{Dimension twenty}
Here $d=20$ in \eqref{eq:g56-combinations}.

For $k=5$ the modular basis is
\begin{equation}\label{eq:dim20-R2}
 R_2^{a,b}=h_0(8\delta_2)^5
 +h_1(8\delta_2)^3\varepsilon_2
 +h_2(8\delta_2)\varepsilon_2^2.
\end{equation}
The triangular relations are
\begin{align}
 j_b&=-h_0,\label{eq:dim20-h0}\\
 Q^{(20)}_{a,b}&=-(120h_0+h_1),
 \label{eq:dim20-h1}\\
 H^{(20)}_{a,b}&=-(5880h_0+80h_1+h_2),
 \label{eq:dim20-h2}
\end{align}
because $24\cdot5(60-11)=5880$.  On the transformed side,
clearing all denominators gives
\begin{equation}\label{eq:dim20-A}
 16V_5P^{(20)}_{a,b}
 =U_5(61440h_0+448h_1-h_2).
\end{equation}
Since $16\equiv U_5\equiv V_5\equiv1\pmod3$, this integer identity implies
\[
 P^{(20)}_{a,b}\equiv h_1+2h_2\pmod3.
\]
The reduction of \eqref{eq:dim20-h2} is also $h_1+2h_2$, since the leading
minus sign changes $80h_1+h_2$ into $h_1+2h_2$ modulo $3$.  Therefore
\begin{equation}\label{eq:dim20-master}
 P^{(20)}_{a,b}-H^{(20)}_{a,b}\equiv0\pmod3.
\end{equation}

For the two parameter families, put $C_a=\binom a2$ and $C_b=\binom b2$. Then

\begin{equation}\label{eq:dim20-a0}
\begin{split}
(a+1)(t_a-20j_a)
&\equiv a\ell_0+C_az_0+(1-20a-40C_a)t_0\\
&\quad+(-20+210a+400C_a)j_0\pmod3.
\end{split}
\end{equation}

\begin{equation}\label{eq:dim20-0b}
\begin{split}
t_0+b(\ell_0-s_0)+20(b-1)j_0
&\equiv b\ell_b+C_bz_b+(1-19b-40C_b)t_b\\
&\quad+(-20+190b+400C_b)j_b\pmod3.
\end{split}
\end{equation}

Taking $(a,b)=(2,0)$ and using $z_0=s_0+\ell_0$ gives
\[
0\equiv H^{(d)}_{2,0}
\equiv s_0+(1-d)t_0+\frac{d(d-3)}2j_0\pmod3.
\]

\begin{equation}\label{eq:dim20-a2}
s_0\equiv t_0+j_0\pmod3.
\end{equation}

For $(a,b)=(0,2)$, reduction of the exact combinations gives
\begin{align*}
P^{(d)}_{0,2}&\equiv t_0-\ell_0+s_0+dj_0,\\
H^{(d)}_{0,2}&\equiv s_2-dt_2+\binom d2j_2.
\end{align*}
Using the preceding Dirac congruence and $P^{(d)}_{0,2}\equiv H^{(d)}_{0,2}$,
\[
s_2-dt_2+\binom d2j_2
+\ell_0-dt_0+\binom{d+1}{2}j_0\equiv0\pmod3.
\]
Substituting the dimension and reducing modulo $3$ gives

\begin{equation}\label{eq:dim20-new}
s_2+\ell_0+j_2\equiv2(t_2+t_0)\pmod3.
\end{equation}

Here $s_2=\Sig(M,\cS(M)\otimes S^2T)$ and $t_2=\Sig(M,\cS(M)\otimes T)$.

\subsection{Dimension twenty-eight}
Here $d=28$ in \eqref{eq:g56-combinations}.

Here $k=7$ and
\begin{equation}\label{eq:dim28-R2}
\begin{split}
 R_2^{a,b}={}&h_0(8\delta_2)^7+h_1(8\delta_2)^5\varepsilon_2\\
 &+h_2(8\delta_2)^3\varepsilon_2^2+h_3(8\delta_2)\varepsilon_2^3.
\end{split}
\end{equation}
The coefficient identities are
\begin{align}
 j_b&=-h_0,\label{eq:dim28-h0}\\
 Q^{(28)}_{a,b}&=-168h_0-h_1,\label{eq:dim28-h1}\\
 H^{(28)}_{a,b}&=-12264h_0-128h_1-h_2.\label{eq:dim28-h2}
\end{align}
For the transformed $q$ coefficient, multiply by $256V_7$:
\begin{equation}\label{eq:dim28-A}
 256V_7P^{(28)}_{a,b}
 =U_7(5505024h_0+53248h_1+320h_2-3h_3).
\end{equation}
This is an integer identity, and $256,U_7,V_7$ are all congruent to
$1$ modulo $3$.  It follows that both
$P^{(28)}_{a,b}$ and $H^{(28)}_{a,b}$ have residue
$h_1+2h_2$ modulo $3$.

Taking $(a,b)=(2,0)$ and using $z_0=s_0+\ell_0$ gives
\[
0\equiv H^{(d)}_{2,0}
\equiv s_0+(1-d)t_0+\frac{d(d-3)}2j_0\pmod3.
\]

\begin{equation}\label{eq:dim28-a2}
s_0\equiv j_0\pmod3.
\end{equation}

For $(a,b)=(0,2)$, reduction of the exact combinations gives
\begin{align*}
P^{(d)}_{0,2}&\equiv t_0-\ell_0+s_0+dj_0,\\
H^{(d)}_{0,2}&\equiv s_2-dt_2+\binom d2j_2.
\end{align*}
Using the preceding Dirac congruence and $P^{(d)}_{0,2}\equiv H^{(d)}_{0,2}$,
\[
s_2-dt_2+\binom d2j_2
+\ell_0-dt_0+\binom{d+1}{2}j_0\equiv0\pmod3.
\]
Substituting the dimension and reducing modulo $3$ gives

\begin{equation}\label{eq:dim28-new}
s_2+\ell_0+j_0\equiv t_2+t_0\pmod3.
\end{equation}

Here $s_2=\Sig(M,\cS(M)\otimes S^2T)$ and $t_2=\Sig(M,\cS(M)\otimes T)$.

Together with the dimension-$24$ formulas $s_0+t_0\equiv0$ and
$s_2+\ell_0\equiv0$, these complete the proof of
Theorem~\ref{thm:selected-intro}.

\subsection{The next coefficient in dimension thirty-two}
The exact formula of Lemma~\ref{lem:R1-q} still applies when $k=8$,
but involves $h_4$.  Put $D_8=1024$ and define the integer expression
\[
 J_8=50331648h_0+524288h_1+4096h_2-h_4.
\]
Clearing the denominators in that formula gives
\[
 1024V_8P^{(32)}_{a,b}=U_8J_8.
\]
To justify integrality of $h_4$, the triangular induction needs one
more Fourier order.  The theta products give
\begin{align*}
8\delta_2&=-1-24t-24t^2-96t^3-24t^4+O(t^5),\\
\varepsilon_2&=t+8t^2+28t^3+64t^4+O(t^5).
\end{align*}
Thus all coefficients needed in this additional step are integral,
and $h_4\in\Z$ follows as before.  Reducing the cleared integer
identity, since $1024\equiv U_8\equiv V_8\equiv1\pmod3$, gives
\[
 P^{(32)}_{a,b}\equiv2h_1+h_2+2h_4\pmod3.
\]
On the other side the integer coefficient formula is
\[
 H^{(32)}_{a,b}=16320h_0+152h_1+h_2
 \equiv2h_1+h_2\pmod3.
\]
The difference therefore has residue $2h_4$.  The same comparison
does not automatically extend to dimension $32$: it requires
divisibility of $h_4$ by $3$ or another relation.  This observation
does not assert that such a condition fails for any particular
string manifold.

\subsection{Concise congruences modulo higher powers of three}
\label{sec:higher-three}

\begin{proof}[Proof of Theorem~\ref{thm:higher-three}]
Let $m=J_0=j_0\in\mathbb Z$, so $U_0=t_0$ and $W_0=s_0$.
In dimension $d$, the genuine-bundle product expression for the Witten bundle is
\[
\Theta_W(T)=\left(\bigotimes_{n\geq1}S_{q^n}(T)\right)
                  \prod_{n\geq1}(1-q^n)^d.
\]
The two factors have expansions
\begin{align*}
\bigotimes_{n\geq1}S_{q^n}(T)&=1+Tq+(S^2T+T)q^2+O(q^3),\\
\prod_{n\geq1}(1-q^n)^d&=1-dq+\frac{d(d-3)}2q^2+O(q^3).
\end{align*}
Their product and the index theorem give
\begin{equation}\label{eq:witten-two-coeff-general}
\begin{split}
 \mathcal W_M(\tau)
 &:={\big\langle}\Ahat(TM)\ch(\Theta_W(T)),[M]{\big\rangle}\\
 &=m+(U_0-dm)q
   +\left(W_0+(1-d)U_0+\frac{d(d-3)}2m\right)q^2+O(q^3).
\end{split}
\end{equation}
Under the string hypothesis this is a holomorphic modular form of weight
$d/2$ for $SL_2(\mathbb Z)$.  The classical structure theorem
$M_*(SL_2(\mathbb Z);\mathbb C)=\mathbb C[E_4,E_6]$
(see also \cite[Theorem 1.1]{ChenHan}) gives
\[
 M_8=\mathbb C E_4^2,\qquad M_{10}=\mathbb C E_4E_6,
 \qquad M_{14}=\mathbb C E_4^2E_6.
\]
Here the subscript denotes modular weight.  Each indicated generator
has constant coefficient one.  Consequently, in dimensions $16,20,28$,
respectively, $\mathcal W_M=mE_4^2$, $mE_4E_6$, and $mE_4^2E_6$.
The standard expansions
\[
 E_4=1+240q+2160q^2+O(q^3),\qquad
 E_6=1-504q-16632q^2+O(q^3)
\]
yield
\begin{equation}\label{eq:higher-three-series}
\begin{array}{c|r|r}
 f&[q]f&[q^2]f\\ \hline
 E_4^2&480&61920\\
 E_4E_6&-264&-135432\\
 E_4^2E_6&-24&-196632
\end{array}.
\end{equation}
If $A=[q]f$ and $B=[q^2]f$, comparison with
\eqref{eq:witten-two-coeff-general} gives the exact identities
\[
 U_0=(d+A)m,\qquad
 W_0=\left(B+(d-1)(d+A)-\frac{d(d-3)}2\right)m.
\]
Thus, before taking any residues,
\begin{equation}\label{eq:higher-three-exact}
\begin{array}{c|r|r}
 d&U_0&W_0\\ \hline
 16&496m&69256m\\
 20&-244m&-140238m\\
 28&4m&-196874m
\end{array}.
\end{equation}
In particular,
\begin{align*}
 d=16:&\quad W_0-J_0=69255m=729\cdot95m,\\
 d=20:&\quad W_0=-140238m=-27\cdot5194m,\qquad
                    U_0+J_0=-243m,\\
 d=28:&\quad W_0-J_0=-196875m=-9\cdot21875m.
\end{align*}
These integer identities prove three rows of
\eqref{eq:higher-three-table} and the additional dimension-$20$ assertion.

For $d=24$, the space has two generators, so the one-dimensional
argument must be replaced by the coefficient elimination already given
in Proposition~\ref{prop:CH-Witten}.  Its exact Chen--Han identity
\eqref{eq:CH-exact} says
\[
 W_0+U_0=196884m=27\cdot7292m.
\]
This proves the remaining row as an arithmetic consequence of the
Chen--Han identity.
\end{proof}

\begin{remark}\label{rem:higher-three-scope}
The refinements in \eqref{eq:higher-three-table} use additional
relations among the selected Dirac indices, so they are compatible
with the limitation in Remark~\ref{rem:no-mod9}. They do not strengthen
the spinor-twisted column or the general parameter comparison, and
the moduli are not claimed to be optimal on string bordism classes.
The corresponding non-string examples in
Example~\ref{ex:genuine-selected} also violate these refinements,
since they already fail modulo $3$.
\end{remark}

\section{Examples}\label{sec:examples}
This section presents counterexamples showing that the congruences
in Theorems~\ref{thm:main-intro} and~\ref{thm:selected-intro}
do not extend to all closed spin manifolds without the string
hypothesis. The first example tests the uniform comparison in
dimensions $8$ and $16$, while the second treats both selected
congruences in every dimension of Theorem~\ref{thm:selected-intro}.
The corresponding Dirac counterexamples also rule out such an
extension of the stronger congruences in
Theorem~\ref{thm:higher-three}.
\begin{bundleexample}
\label{ex:genuine-uniform}
As in \cite[Section 3]{ChenHan}, let $H=\mathbb{HP}^2$ with its standard
orientation.  It is spin since $H^2(H;\mathbb Z/2)=0$, and
\[
 H^*(H;\mathbb Z)=\mathbb Z[u]/(u^3),\qquad
 \langle u^2,[H]\rangle=1,\qquad p_1(TH)=2u,\quad p_2(TH)=7u^2.
\]
For an eight-dimensional manifold, the characteristic expansions needed
here are
\begin{align*}
 \Ahat&=1-\frac{p_1}{24}+\frac{7p_1^2-4p_2}{5760},\\
 \ch(T)&=8+p_1+\frac{p_1^2-2p_2}{12},\\
 \ch(\Lambda^2T)&=28+6p_1+\frac{p_1^2}{2},\\
 \ch(S^2T)&=36+10p_1+\frac{11p_1^2}{6}-\frac{8p_2}{3},\\
 \Ahat\ch(\cS)^{2}
  &=256\left(1+\frac{5p_1}{24}
                  +\frac{67p_1^2+116p_2}{5760}\right),
\end{align*}
where only terms of degree at most eight are displayed.  The square-power
identities used here are
$\ch(S^2T)=(\ch(T)^2+\ch(\psi^2T))/2$ and
$\ch(\Lambda^2T)=(\ch(T)^2-\ch(\psi^2T))/2$, where
$\psi^2$ doubles each Chern root.  Integration gives
\[
\begin{array}{c|rrrr}
 &J_r(H)&U_r(H)&V_r(H)&W_r(H)\\ \hline
 r=0&0&-1&1&-13\\
 r=2&48&384&3136&960
\end{array}.
\]
For $a=b=0$, \eqref{eq:genuine-uniform-23} would assert
$U_0-8J_0\equiv0\pmod3$, whereas on $H$ this integer is $-1$.
This disproves an extension of that congruence to all spin manifolds.

For the higher-dimensional comparison, take $M=H\times H$ and
$(a,b)=(2,0)$.  Multiplicativity of the index and the decomposition of
squares of a direct sum give
\[
 (J_0,U_0,V_0,W_0)=(0,0,1,1),\qquad
 (J_2,U_2)=(2304,36864),\qquad Z_0=V_0+W_0=2.
\]
Here $d=16$, $c=1$, $\alpha=-63$, and $\beta=512$.
The left side of \eqref{eq:genuine-uniform-high} is
$3U_2-48J_2=0$, while its right side is
$2V_0+Z_0-63U_0+512J_0=4\not\equiv0\pmod3$.
Moreover, $p_1(TM)=2u_1+2u_2\ne0$.
These examples show that the $p_1$-vanishing hypothesis cannot simply be
dropped from the uniform conclusion.  They do not assert failure for
every non-string manifold or every parameter choice.
\end{bundleexample}

\begin{bundleexample}
\label{ex:genuine-selected}
We follow the product-manifold method of \cite[Section 3]{ChenHan}.
Retain $H=\mathbb{HP}^2$ and the index values from
Example~\ref{ex:genuine-uniform}.  Let $K$ be a K3 surface with its complex
orientation.  Thus $K$ is spin,
$\langle p_1(TK),[K]\rangle=-48$, and $\Ahat(K)=2$.
Let $B$ be the eight-dimensional spin Bott manifold used in
\cite[Section 3]{ChenHan}, with $\Ahat(B)=1$ and $\Sig(B)=0$.
Consider the following closed spin manifolds:
\[
\begin{array}{c|l}
 d&M_d\\ \hline
 8&H\\
 12&K\times H\\
 16&H\times H\\
 20&K\times H\times H\\
 24&B\times H\times H\\
 28&K\times B\times H\times H
\end{array}.
\]
Each has $p_1(TM_d)\ne0$: restriction to any displayed $H$ factor gives
$2u\ne0$.  In dimension $24$ this is exactly the product used by Chen
and Han to show the necessity of the string hypothesis for their
congruence; here we test the different congruences in
Theorem~\ref{thm:selected-intro}.

For clarity, define the integer residuals corresponding to the two
congruences before coefficient reduction:
\[
 R_D(M)=
 \begin{cases}
 U_0-dJ_0,&d=8,12,\\
 W_0+(1-d)U_0+\dfrac{d(d-3)}2J_0,&d=16,20,24,28,
 \end{cases}
\]
\[
 R_S(M)=
 \begin{cases}
 U_2-dJ_2+V_0-W_0+dJ_0,&d=8,12,\\
 W_2-dU_2+\dbinom d2J_2+V_0-dU_0+\dbinom{d+1}{2}J_0,
       &d=16,20,24,28.
 \end{cases}
\]
Both would have to vanish modulo $3$ if the corresponding row of
\eqref{eq:genuine-selected-table} held without the $p_1$ condition.
Instead, direct calculation gives
\[
\begin{array}{c|rr}
 d&R_D(M_d)\bmod3&R_S(M_d)\bmod3\\ \hline
 8&2&2\\
 12&1&1\\
 16&1&1\\
 20&2&2\\
 24&1&1\\
 28&2&2
\end{array}.
\]
We give the computations to verify both columns.  The full spinor bundle
and the $\Ahat$-class are multiplicative on these even-dimensional
products.  For two factors $X,Y$, with pulled-back bundles understood,
\begin{align*}
 J_r(X\times Y)&=J_r(X)J_r(Y),\\
 U_r(X\times Y)&=U_r(X)J_r(Y)+J_r(X)U_r(Y),\\
 V_r(X\times Y)&=V_r(X)J_r(Y)+J_r(X)V_r(Y)+U_r(X)U_r(Y),\\
 W_r(X\times Y)&=W_r(X)J_r(Y)+J_r(X)W_r(Y)+U_r(X)U_r(Y).
\end{align*}
These are the same direct-sum and multiplicativity identities as those
used in \cite[Lemma 3.1]{ChenHan}.
For $H$ the values already computed yield
\[
 R_D(H)=-1,\qquad
 R_S(H)=384-8\cdot48+1-(-13)=14.
\]
For $K$, the degree-four expansions
\begin{align*}
 \Ahat&=1-\frac{p_1}{24},&
 \ch(\cS)&=4\left(1+\frac{p_1}{8}\right),&
 \ch(T)&=4+p_1,\\
 \ch(\Lambda^2T)&=6+2p_1,& \ch(S^2T)&=10+6p_1
\end{align*}
give
\[
 (J_0,U_0,V_0,W_0)(K)=(2,-40,-84,-268),\qquad
 (J_2,U_2)(K)=(-160,-1408).
\]
Consequently, on $K\times H$,
\[
 (J_0,U_0,V_0,W_0)=(0,-2,42,14),\qquad
 (J_2,U_2)=(-7680,-129024),
\]
and hence
\[
 R_D(K\times H)=-2,\qquad
 R_S(K\times H)=-129024+12\cdot7680+42-14=-36836.
\]
Both integers are $1$ modulo $3$.

For the four higher dimensions, write $M_d=H\times H\times X$, where
$X$ is respectively a point, $K$, $B$, or $K\times B$.
The product formulas give
\[
 J_0(M_d)=U_0(M_d)=0,\qquad
 V_0(M_d)=W_0(M_d)=\Ahat(X).
\]
Moreover, $J_2(H)=48$, $U_2(H)=384$, and $W_2(H)=960$ are all
divisible by $3$.  The product formulas therefore imply
\[
 J_2(M_d)\equiv U_2(M_d)\equiv W_2(M_d)\equiv0\pmod3.
\]
It follows that
\[
 R_D(M_d)\equiv R_S(M_d)\equiv\Ahat(X)\pmod3,
 \qquad \Ahat(X)=1,2,1,2,
\]
which proves the remaining four rows.  In particular, on the reference
product $B\times H\times H$,
\[
 \Ind(D\otimes S^2T)+\Ind(D\otimes T)=1,
\]
\[
 \Sig(M,\cS(M)\otimes S^2T)+\Ind(D\otimes\Lambda^2T)
       \equiv1\pmod3.
\]
Thus both concise formulas fail in each listed dimension after the
$p_1$-vanishing hypothesis is removed.  The conclusion is the failure of
a general extension to spin manifolds, not a claim that nonzero $p_1$
forces failure on every individual manifold.
\end{bundleexample}

\section*{Acknowledgements}
 The authors were supported by Science and Technology Development Plan Project of Jilin Province, China: No.20260102245JC, NSFC. No.12401059, the Fundamental
Research Funds for the Central Universities N2405015 and Liaoning Province Science and Technology Plan Joint Project 2023-BSBA-118. The authors thank the referee for his (or her) careful reading and helpful comments.

\vskip 1 true cm

\section*{Data availability}

No data were collected for this article.

\section*{Conflict of interest}

The authors have no relevant financial or non-financial interests to disclose.


\begin{thebibliography}{99}

\bibitem{AtiyahKTheory}
M. F. Atiyah,
\emph{K-Theory},
W. A. Benjamin, New York--Amsterdam, 1967.

\bibitem{AtiyahSingerIII}
M. F. Atiyah and I. M. Singer,
The index of elliptic operators. III,
\emph{Ann. of Math. (2)} \textbf{87} (1968), 546--604.

\bibitem{ChenHan}
Q. Chen and F. Han,
Mod $3$ congruence and twisted signature of $24$ dimensional string
manifolds,
\emph{Trans. Amer. Math. Soc.} \textbf{367} (2015), 2959--2977.

\bibitem{Hirzebruch}
F. Hirzebruch,
\emph{Topological Methods in Algebraic Geometry},
3rd ed., Grundlehren Math. Wiss. 131, Springer, Berlin, 1966.

\bibitem{LandweberStong}
P. S. Landweber and R. E. Stong,
Circle actions on spin manifolds and characteristic numbers,
\emph{Topology} \textbf{27} (1988), 145--161.

\bibitem{LawsonMichelsohn}
H. B. Lawson, Jr. and M.-L. Michelsohn,
\emph{Spin Geometry},
Princeton Math. Ser. 38, Princeton University Press, Princeton, NJ, 1989.

\bibitem{LiuCMP}
K. Liu,
Modular invariance and characteristic numbers,
\emph{Comm. Math. Phys.} \textbf{174} (1995), 29--42.

\bibitem{LiuTopology}
K. Liu,
On elliptic genera and theta-functions,
\emph{Topology} \textbf{35} (1996), 617--640.

\bibitem{LiuWang}
K. Liu and Y. Wang,
A note on modular forms and generalized anomaly cancellation formulas,
\emph{Sci. China Math.} \textbf{56} (2013), 55--65.

\bibitem{Ochanine}
S. Ochanine,
Signature modulo $16$, invariants de Kervaire g\'en\'eralis\'es et nombres
caract\'eristiques dans la $K$-th\'eorie r\'eelle,
\emph{M\'em. Soc. Math. France (N.S.)} No. 5 (1981), 1--142.

\bibitem{Rokhlin}
V. A. Rokhlin,
New results in the theory of four-dimensional manifolds,
\emph{Doklady Akad. Nauk SSSR (N.S.)} \textbf{84} (1952), 221--224.

\bibitem{StolzTeichner}
S. Stolz and P. Teichner,
What is an elliptic object?,
in \emph{Topology, Geometry and Quantum Field Theory},
London Math. Soc. Lecture Note Ser. 308, Cambridge University Press,
Cambridge, 2004, 247--343.

\bibitem{WittenElliptic}
E. Witten,
Elliptic genera and quantum field theory,
\emph{Comm. Math. Phys.} \textbf{109} (1987), 525--536.

\end{thebibliography}
\end{document}